\def\ps@pprintTitle{%
 \let\@oddhead\@empty
 \let\@evenhead\@empty
 \def\@oddfoot{\centerline{\thepage}}%
 \let\@evenfoot\@oddfoot}
\def\e{{\rm e}}
\def\rmd{{\rm d}}
\newtheorem{theorem}{{\bf Theorem}}
\newtheorem{proposition}[theorem]{{\bf Proposition}}
\newtheorem{lemma}[theorem]{{\bf Lemma}}
\def\bfa{\boldsymbol{a}}
\def\bfb{\boldsymbol{b}}
\def\bfx{\boldsymbol{x}}
\def\bfy{\boldsymbol{y}}
\def\bfp{\boldsymbol{p}}
\def\CC{\mathbb{C}}
\def\RR{\mathbb{R}}
\def\CCC{\mathcal{C}}
\def\OOO{\mathcal{O}}
\def\tOOO{\tilde{\mathcal{O}}}
\def\SimilarPol{\texttt{SimilarPol}}
\def\SimilarGen{\texttt{SimilarGen}}
\def\Sage{\textsf{Sage}}
\def\Singular{\textsf{Singular}}
\def\Flint{\textsf{FLINT}}
\begin{document}

\begin{frontmatter}
\title{Detecting Similarity of Rational Plane Curves}


\author[a]{Juan Gerardo Alc\'azar\fnref{proy}}
\ead{juange.alcazar@uah.es}
\author[a]{Carlos Hermoso}
\ead{carlos.hermoso@uah.es}
\author[b]{Georg Muntingh\fnref{grant}}
\ead{georgmu@math.uio.no}

\address[a]{Departamento de F\'{\i}sica y Matem\'aticas, Universidad de Alcal\'a,
E-28871 Madrid, Spain}
\address[b]{SINTEF ICT, PO Box 124 Blindern, 0314 Oslo, Norway, and\\Department of Mathematics, University of Oslo, PO Box 1053, Blindern, 0316 Oslo, Norway\!\!\!\!\!\!\!\!
}

\fntext[proy]{Supported by the Spanish ``Ministerio de
Ciencia e Innovacion" under the Project MTM2011-25816-C02-01. Partially supported by
a Jos\'e Castillejos' grant from the Spanish Ministerio de Educaci\'on, Cultura y Deporte.
Member of the Research Group {\sc asynacs} (Ref. {\sc ccee2011/r34}) }

\fntext[grant]{Partially supported by the Giner de los R\'ios grant from the Universidad de Alcal\'a.}


\begin{abstract}
A novel and deterministic algorithm is presented to detect whether two given rational plane curves are related by means of a similarity, which is a central question in Pattern Recognition. As a by-product it finds all such similarities, and the particular case of equal curves yields all symmetries. A complete theoretical description of the method is provided, and the method has been implemented and tested in the {\Sage} system for curves of moderate degrees.
\end{abstract}

\end{frontmatter}

\section{Introduction}\label{section-introduction}
\noindent A central problem in Pattern Recognition and Computer Vision is to detect whether a certain object corresponds to one of the objects stored in a database. The goal is to identify the given object as one of the objects in the database, and therefore to classify it or to consider it as unknown. When the objects involved in the recognition process are planar and defined by their silhouettes, algebraic curves can be used. For instance, in this setting silhouettes of aircraft prototypes \cite{Huang} and of sea animals \cite{TC00} have previously been considered.

However, the two objects to be compared need not be in the same position, orientation, or scale. In order to compare the two objects, it should therefore be checked whether there exists a nontrivial movement, also called \emph{similarity}, transforming the object to be analyzed into the possible target in the database. This problem, known in the Computer Vision literature as \emph{pose estimation}, has been extensively considered using many different techniques, using B-splines \cite{Huang}, Fourier descriptors \cite{Sener}, complex representations \cite{TC00}, statistics \cite{Gal, Lei, Mishra} also in the 3D case, moments \cite{Suk1, Taubin2}, geometric invariants \cite{Unel, WU98a, WU98b}, Newton-Puiseux parametrizations \cite{MM02}, and differential invariants \cite{Boutin, Calabi, Weiss}. The interested reader may consult the bibliographies in these papers to find other references on the matter.

With exception for the references concerning the B-splines and differential invariants, the above methods use the implicit form of the curves. Moreover, the above methods are either numerical, or only efficient when considered in a numerical setting. The reason for this is that in Pattern Recognition it is often assumed that the inputs are ``fuzzy''. For instance, in many cases the objects are represented discretely as point clouds. In this situation, an implicit equation is usually first computed for each cloud, and the comparison is performed later. In other cases there may be occluded parts or noise in the input. Alternatively the input might be exact, but modeling a real object only up to a certain extent. In all of these cases we do not need an perfect matching, so that a numerical comparison is sufficient.

In this paper, we address the problem from a perspective that differs in two ways. First of all, we assume that the curves are given in \emph{exact} arithmetic, so that we can provide a deterministic answer to the question whether these two curves are similar. If so, our algorithm will find the similarities transforming one into the other. A second difference is that we start from rationally, or piecewise rationally, \emph{parametrized} curves and carry out all computations in the parameter space. As a consequence, the cost of converting to the implicit form, both in terms of computing time and growth of the coefficients, is avoided. This representation is important in applications of Computer-Aided Geometric Design. In CAD-based systems, for instance, curves are typically represented by means of piecewise-rational parametrizations, usually of moderate degrees, like (rational) B\'ezier curves, splines and NURBS. Therefore, if the comparison is to be made between these representations, an algorithm based on the parametric form is desirable.

A first potential application of the ideas in the paper is related to computer algebra systems. Assume that a database with classical curves is stored in your favourite computer algebra system. Using the algorithms in this paper, the system can recognize a certain curve introduced by the user as one of the curves in the database. This way, the user can identify a curve as, say, a cardioid, an epitrochoid, a deltoid, etc.

A second application is related to Computer Graphics. In this field, the recognition of similarities simplifies manipulating and storing images, since in the presence of a similarity we need to store only one image, and the similarity producing the other. In the case of curves represented by B\'ezier curves or splines, one can detect similarity by checking whether the corresponding control polygons are similar. This is easy, because the control polygons are piecewise linear objects. However, this is less clear in the case of rational B\'ezier curves or NURBS. Furthermore, if one is not interested in global similarities but in {\it partial} similarities, i.e., in determining whether parts of the objects are similar, then it is no longer clear how to derive this from the control polygons. In addition, any algorithm to find the similarities between rational curves is also an algorithm to find the symmetries of a rational curve, since an algebraic curve is self-similar if and only if it is symmetric; see Proposition \ref{prop:sym}. Symmetry detection has been massively addressed in the field of Computer Graphics to gain understanding when analyzing pictures, and also in order to perform tasks like compression, shape editing, and shape completion; see for instance \cite{Berner08, Bokeloh, Li08, Li10, Mitra06, Podolak}, and the references therein.

We exploit and generalize some ideas used in \cite{Alcazar13, Alcazar.Hermoso13} for the computation of symmetries of rational plane curves, improving the algorithm provided in \cite{Alcazar.Hermoso13}. Our approach exploits the rational parametrizations to reduce to calculations in the parameter domain, and therefore to operations on univariate polynomials. Thus we proceed symbolically to determine the existence and computation of such similarities, using basic polynomial multiplication, GCD-computations. Additionally, if a numerical approximation is desired, univariate polynomial real-solving must be used as well. We have implemented and tested this algorithm in the {\Sage} system \cite{sage}. It is also worth mentioning that, as a by-product, we achieve an algorithm to detect whether a given rational curve is symmetric and to find such symmetries. This problem has previously been studied from a deterministic point of view \cite{Alcazar13, Alcazar.Hermoso13, Carmichael, LR08}, and by many authors from an approximate point of view.

The structure of the paper is the following. Some generalities on similarities and symmetries, to be used throughout the paper, are established in Section \ref{sec-prelim}. The method itself is addressed in Section \ref{sec-detect-2}, first for polynomially parametrized curves and then for the general case. The case of piecewise rational curves is addressed at the end of this section. Finally, practical details on the implementation, including timings, are provided in Section \ref{sec-implem}.

\section{Similarities and symmetries} \label{sec-prelim}
\noindent Throughout the paper, we consider rational plane algebraic curves $\CCC_1, \CCC_2 \subset \RR^2$ that are neither a line nor a circle. Such curves are \emph{irreducible}, i.e., they are the zero sets of polynomials that can not be factored over the reals, and can be parametrized by rational maps
\begin{equation}\label{eq:parametrizations}
\phi_j: \RR \dashrightarrow C_j\subset \RR^2, \qquad \phi_j(t) = \big( x_j(t), y_j(t) \big),\qquad j = 1,2.
\end{equation}
The components $x_j, y_j$ of $\phi_j$ are rational functions of $t$, and they are defined for all but a finite number of values of $t$. We assume that the parametrizations \eqref{eq:parametrizations} are \emph{proper}, i.e., birational or, equivalently, injective except for perhaps finitely many values of $t$. In particular, the parametrizations have a rational inverse defined on their images. This is no restriction on the curves $\CCC_1,\CCC_2$, as any rational curve always admits a proper parametrization. For proofs of these claims and for a thorough study on properness, see \cite[\S 4.2]{SWPD}.

Roughly speaking, an \emph{(affine) similarity} of the plane is a linear affine map from the plane to itself that preserves ratios of distances. More precisely, a map $f: \RR^2 \longrightarrow \RR^2$ is a similarity if $f(x) = Ax + b$ for an invertible matrix $A\in \RR^{2\times 2}$ and a vector $b\in \RR^2$, and there exists an $r > 0$ such that
\[ \|f(x) - f(y)\|_2 = r\|x - y\|_2,\qquad x,y\in \RR^2, \]
where $\|\cdot\|_2$ denotes the Euclidean norm. We refer to $r$ as the \emph{ratio} of the similarity. Notice that if $r = 1$ then $f$ is an \emph{(affine) isometry}, in the sense that it preserves distances. The similarities of the plane form a group under composition, and the isometries form a subgroup. It is well known, and easy to derive, that a similarity can be decomposed into a translation, an orthogonal transformation, and a uniform scaling by its ratio $r$. The curves $\CCC_1, \CCC_2$ are \emph{similar}, if one is the image of the other under a similarity.

For analyzing the similarities of the plane, we have found it useful to identify the Euclidean plane with the complex plane. Through this correspondence $(x,y) \simeq x + iy$, the parametrizations \eqref{eq:parametrizations} correspond to parametrizations
\begin{equation}\label{eq:parametrizations2}
z_j: \RR\dashrightarrow C_j\subset \CC, \qquad z_j(t) = x_j(t) + iy_j(t),\qquad j = 1, 2.
\end{equation}
We can distinguish two cases for a similarity of the complex plane. A similarity $f$ is either \emph{orientation preserving}, in which case it takes the form $f(z) = \bfa z + \bfb$, or \emph{orientation reversing}, in which case it takes the form $f(z) = \bfa \overline{z} + \bfb$. In each case, its ratio $r = |\bfa|$.

A \emph{M\"obius transformation} is a rational function
\begin{equation}\label{eq:Moebius}
\varphi: \RR\dashrightarrow \RR,\qquad \varphi(t) = \frac{\alpha t + \beta}{\gamma t + \delta},\qquad \Delta := \alpha\delta - \beta\gamma \neq 0.
\end{equation}
It is well known that the birational functions on the line are the M\"obius transformations \cite{SWPD}.

\begin{theorem}\label{th-fund}
Let $\CCC_1, \CCC_2 \subset \CC$ be rational plane curves with proper parametrizations $z_1, z_2: \RR\dashrightarrow \CC$. Then $\CCC_1$, $\CCC_2$ are similar if and only if there exists a similarity $f$ and a M\"obius transformation $\varphi$ such that
\begin{equation}\label{ec-0}
z_2\big(\varphi(t)\big)= f\big(z_1(t)\big).
\end{equation}
Moreover, if $\CCC_1, \CCC_2$ are similar by a similarity $f$, then there exists a unique M\"obius transformation $\varphi$ satisfying \eqref{ec-0}.
\end{theorem}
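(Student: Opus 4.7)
My strategy is to exploit the defining property of proper parametrizations, namely that they establish birational equivalences with the underlying curve, together with the fact (stated just before the theorem) that birational self-maps of the line are exactly M\"obius transformations.

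For the forward implication, assume $\CCC_2 = f(\CCC_1)$ for some similarity $f$, and consider the composition $\psi := f \circ z_1$. I first check that $\psi$ is a proper (rational) parametrization of $\CCC_2$. Rationality is immediate in the orientation-preserving case $f(z) = \bfa z + \bfb$, since $\psi(t) = \bfa z_1(t) + \bfb$ is obviously rational in $t$; in the orientation-reversing case $f(z) = \bfa \overline{z} + \bfb$, we use that $z_1(t) = x_1(t) + iy_1(t)$ with $x_1,y_1$ real-rational, so $\overline{z_1(t)} = x_1(t) - iy_1(t)$ is still rational in $t$. Properness of $\psi$ follows because $f$ is a bijection of $\CC$ and $z_1$ is injective off a finite set. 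Since both $\psi$ and $z_2$ are proper parametrizations of the same rational curve $\CCC_2$, the rational map $\varphi := z_2^{-1} \circ \psi$ is a birational self-map of $\RR$, and therefore a M\"obius transformation by the remark preceding the theorem. By construction, $z_2 \circ \varphi = \psi = f \circ z_1$.

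For the converse, suppose $z_2 \circ \varphi = f \circ z_1$ holds for some similarity $f$ and some M\"obius transformation $\varphi$. Since $\varphi$ is a bijection on $\RR$ off a finite set, $\varphi(\RR)$ differs from $\RR$ only at finitely many points, so the image of $z_2 \circ \varphi$ is $\CCC_2$ up to finitely many points. On the other hand, the image of $f \circ z_1$ is $f(\CCC_1)$, again up to finitely many points. Since $\CCC_1$ and $\CCC_2$ are irreducible curves, the equality of two cofinite subsets forces $f(\CCC_1) = \CCC_2$, so the curves are similar.

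For uniqueness of $\varphi$ given $f$, suppose $\varphi_1, \varphi_2$ both satisfy \eqref{ec-0}. Then $z_2(\varphi_1(t)) = z_2(\varphi_2(t))$ for all $t$ in the common domain, and properness of $z_2$ (injectivity off a finite set) forces $\varphi_1(t) = \varphi_2(t)$ on an infinite set. Two rational functions agreeing on an infinite set are identical, so $\varphi_1 = \varphi_2$.

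The only mildly delicate point is the orientation-reversing case, where one must observe that composition with complex conjugation preserves the rational (not merely real-analytic) nature of the parametrization because the components $x_1, y_1$ are separately rational over $\RR$; everything else is a direct application of the birational equivalence between $\RR$ and a proper parametrization of a rational curve.
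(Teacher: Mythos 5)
Your proposal is correct and follows essentially the same route as the paper: define $\varphi = z_2^{-1}\circ f\circ z_1$, argue it is a birational self-map of the line and hence a M\"obius transformation, deduce the converse by comparing images, and obtain uniqueness from the properness (generic injectivity) of $z_2$. Your extra checks --- that $f\circ z_1$ remains rational in the orientation-reversing case and that the image comparison only holds up to finitely many points --- are welcome refinements of details the paper leaves implicit, but they do not change the argument.
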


Equation \eqref{ec-0} is equivalent to the existence of a commutative diagram
\begin{equation}\label{eq:fundamentaldiagram}
\xymatrix{
\CC \ar[r]^{f} & \CC \\
\RR \ar@{-->}[u]^{z_1} \ar@{-->}[r]_{\varphi} & \RR \ar@{-->}[u]_{z_2}
}
\end{equation}
which relates the problem of finding a similarity ``upstairs'' to finding a corresponding M\"obius transformation ``downstairs''.

\begin{proof}
Suppose that $\CCC_1$ and $\CCC_2$ are similar. Then there exists a similarity $f$ of the plane that restricts to a bijection $f: \CCC_1\longrightarrow \CCC_2$. Since $z_1, z_2$ are proper, their inverses $z_1^{-1}, z_2^{-1}$ exist as rational functions on $\CCC_1, \CCC_2$. The composition $z_2^{-1}\circ f\circ z_1$ is therefore also a rational function with inverse $z_1^{-1}\circ f^{-1}\circ z_2$, which is rational as well. Hence, $\varphi=z_2^{-1}\circ f \circ z_1$ is a birational transformation on the real line, implying that it must be a M\"obius transformation. Clearly this choice of $f$ and $\varphi$ makes the diagram \eqref{eq:fundamentaldiagram} commute.

Conversely, suppose we are given a similarity $f$ of the plane and a M\"obius transformation $\varphi$ that makes the diagram \eqref{eq:fundamentaldiagram} commute. The similarity $f$ maps any point $z\in \CCC_1$ to $f(z) = z_2\circ \varphi\circ z_1^{-1}(z)$, which lies in the image of $z_2$ and therefore on the curve $\CCC_2$. It follows that $\CCC_1$ and $\CCC_2$ must be similar.

For the final claim, suppose that there are two M\"obius transformations $\varphi_1, \varphi_2$ making the diagram \eqref{eq:fundamentaldiagram} commute. Then $z_2\big(\varphi_1(t)\big) = f\big(z_1(t)\big) = z_2\big(\varphi_2(t)\big)$. Since the parametrization $z_2$ is proper, we conclude $\varphi_1=\varphi_2$.
\end{proof}

Although the final claim refers to a M\"obius transformation $\varphi$ that is unique, its coefficients, i.e., $\alpha,\beta,\gamma,\delta$ in \eqref{eq:Moebius}, are only defined up to a common multiple. Since $\varphi$ maps the real line to itself, these coefficients can always be assumed to be real by dividing by a common complex number if necessary.

Depending on whether $f$ preserves or reverses the orientation of the complex plane, the diagram \eqref{eq:fundamentaldiagram} will either take the form
\begin{equation}\label{ec-1}
z_2\big(\varphi(t)\big)=\bfa z_1(t)+\bfb,
\end{equation}
or
\begin{equation} \label{ec-2}
z_2\big(\varphi(t)\big)=\bfa \overline{z_1(t)}+\bfb.
\end{equation}

Next, let $\CCC$ be a rational plane curve, neither a line nor a circle, given by a proper rational map
\[ \phi: \RR\dashrightarrow \CCC\subset \RR^2,\qquad \phi(t) = \big(x(t), y(t)\big). \]
A \emph{self-similarity} $f$ of $\CCC$ is a similarity of the plane satisfying $f(\CCC) = \CCC$.

\begin{proposition}\label{prop:sym}
Let $\CCC$ be an algebraic curve that is not a union of (possibly complex) concurrent lines. Any self-similarity $f$ of $\CCC$ is an isometry.
\end{proposition}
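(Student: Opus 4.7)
The plan is to exploit the fact that a similarity with ratio $r\neq 1$ has a unique fixed point, and then to compare homogeneous components of a polynomial defining $\CCC$. Suppose, toward contradiction, that $f$ is a self-similarity of $\CCC$ with ratio $r\neq 1$. Both in the orientation-preserving case $f(z)=\bfa z+\bfb$ with $|\bfa|=r$ and in the orientation-reversing case $f(z)=\bfa\overline{z}+\bfb$, the fixed-point equation $z=f(z)$ has a unique solution $p$, since $\bfa\neq 1$ in the first case and $1-|\bfa|^2=1-r^2\neq 0$ in the second. Translating coordinates so that $p$ becomes the origin, $f$ becomes a real-linear conformal map of the form $f(v)=rQv$ for some $Q\in O(2)$.

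Next, I would let $F\in\RR[x,y]$ be a reduced polynomial defining $\CCC$. Since $f$ is a linear bijection preserving $\CCC$ and $\deg(F\circ f^{-1})=\deg F$, the Nullstellensatz yields $F\circ f^{-1}=\lambda F$ for some $\lambda\in\RR^{\times}$. Decomposing $F=\sum_i F_i$ into homogeneous parts and using $F_i(r^{-1}Q^{-1}v)=r^{-i}F_i(Q^{-1}v)$, separating by total degree produces
\[
F_i\circ Q^{-1} \;=\; \lambda r^i\, F_i, \qquad \text{for every } i \text{ with } F_i\neq 0.
\]
The key step is to invoke the compactness of $O(2)$: its action on each space of degree-$i$ homogeneous polynomials preserves the invariant inner product $\langle G,H\rangle=\int_{S^1}GH\,d\sigma$ and is therefore unitary, so all its eigenvalues have modulus $1$. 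Consequently $|\lambda r^i|=1$ for every index $i$ in the support of $F$.

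If $F$ had nonzero components of two distinct degrees $i\neq j$, we would obtain $r^{i-j}=1$ and hence $r=1$, contradicting the assumption. So $F=F_d$ must be homogeneous of a single degree $d$; but a homogeneous bivariate polynomial factors over $\CC$ into linear forms, making $\CCC$ a union of (possibly complex) concurrent lines through $p$, contradicting the hypothesis. Hence $r=1$ and $f$ is an isometry. The main technical point that needs care is the identity $F\circ f^{-1}=\lambda F$; it follows from the Nullstellensatz applied to the Zariski closure of $\CCC$ in $\CC^2$, taking $F$ as the reduced polynomial cutting it out, and the rest of the argument is then routine linear algebra of the compact group $O(2)$ acting on finite-dimensional polynomial spaces.
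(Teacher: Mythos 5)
Your proof is correct, and its skeleton is the same as the paper's: assume $r\neq 1$, pass to the unique fixed point, compare homogeneous components of a reduced defining polynomial, and conclude that the polynomial would have to be homogeneous, i.e.\ that $\CCC$ would be a union of concurrent (possibly complex) lines. The two points where you diverge are worth noting. First, you treat the orientation-preserving and orientation-reversing cases uniformly by writing the linear part as $rQ$ with $Q\in O(2)$ and invoking the invariant inner product $\int_{S^1}GH\,d\sigma$ to get $|\lambda r^i|=1$; the paper instead works with $G(z,\bar z)$, where the monomials $z^i\bar z^{k-i}$ are an explicit eigenbasis for $z\mapsto \bfa z$ with eigenvalues of modulus $|\bfa|^k$, and then disposes of the orientation-reversing case by applying the preserving case to $f\circ f$. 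Your compact-group argument is slightly more abstract but generalizes immediately (e.g.\ to $O(n)$ acting on hypersurfaces in $\RR^n$), while the paper's monomial computation is more elementary and self-contained. Second, you are more explicit about the step the paper glosses over, namely why $F\circ f^{-1}$ is a scalar multiple of $F$ (reducedness of $F$ plus the Nullstellensatz applied to the Zariski closure); that added care is a genuine improvement rather than a deviation. No gaps.
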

\begin{proof}
Writing $z = x + iy$, the curve $\CCC$ is the zeroset of a polynomial
\begin{equation*}
G(z,\bar{z}) = \sum_{k=0}^d G_k(z, \bar{z}),
\end{equation*}
where $G_k(z, \bar{z})$ is homogeneous of degree $k$ and $G_d(z, \bar{z})$ is nonzero. Moreover, since $\CCC$ is not the union of concurrent lines, at least one other form $G_l(z, \bar{z})$, with $0\leq l\leq d-1$, must be nonzero.

Let us first consider the case that $f(z) = \bfa z + \bfb$ is an orientation-preserving symmetry. Suppose that $f$ is not an isometry, i.e., $|\bfa|\neq 1$. Then $f$ has a unique fixed point $\bfb/(1 - \bfa)$, which we can without loss of generality assume to be located at the origin by applying an affine change of coordinates if necessary. Hence, in these coordinates $\CCC$ has the symmetry $f(z) = \bfa z$, so that it is also the zeroset of $G(\bfa z, \bar{\bfa} \bar{z})$, which therefore must be a scalar multiple of $G(z,\bar{z})$. Since this scalar multiple must be the same for $G_l$ and $G_d$, one has $\bfa^i \bar{\bfa}^{l-i} = \bfa^j \bar{\bfa}^{d-j}$ for some $i$ and $j$, implying $|\bfa|^l = |\bfa|^d$ and therefore contradicting our assumption that $|\bfa| \neq 1$.

Next consider the case that $f(z) = \bfa \bar{z} + \bfb$ is an orientation-reversing symmetry. Then it also has the (possibly trivial) orientation-preserving symmetry $f\big(f(z)\big) = \bfa\bar{\bfa} z + \bfb + \bfa\bar{\bfb}$, which must be an isometry by the above paragraph. It follows that $|\bfa|^2 = 1$ and therefore that $f$ must be an isometry.
\end{proof}

The isometries of the plane form a group under composition, which consists of \emph{reflections} $f(z) = \e^{i\theta}\big(\overline{z - \bfb}\big) + \bfb$, which reflect in the line $\Im\big(\e^{-i\theta/2}z - \e^{-i\theta/2} \bfb \big) = 0$, \emph{rotations} $f(z) = \e^{i\theta} (z - \bfb) + \bfb$, which rotate around a point $\bfb$, \emph{translations} $f(z) = z + \bfb$, and \emph{glide reflections}, which are a composition of a reflection and a translation \cite{Coxeter}.

An isometry of the plane leaving $\CCC$ invariant, is more commonly known as a \emph{symmetry} of $\CCC$.  When $\CCC$ is different from a line it cannot be invariant under a translation or a glide reflection, as this would imply the existence of a line intersecting the curve in infinitely many points, contradicting B\'ezout's theorem. The remaining symmetries are therefore the \emph{mirror symmetries} (reflections) and the \emph{rotation symmetries}. The special case of \emph{central symmetries} is of particular interest and corresponds to rotation by $\theta = \pi$. Notice that the identity map is a symmetry of any curve $\CCC$, called the \emph{trivial symmetry}.

Symmetries of algebraic curves and the similarities between them are related as follows.

\begin{theorem} \label{thsim}
Let $\CCC_1,\CCC_2$ be two distinct, irreducible, similar, algebraic curves, neither of them a line or a circle. Then there are at most finitely many similarities mapping $\CCC_1$ to $\CCC_2$. Furthermore, there is a unique similarity mapping $\CCC_1$ to $\CCC_2$ if and only if $\CCC_1$ (and therefore also $\CCC_2$) only has the trivial symmetry.
\end{theorem}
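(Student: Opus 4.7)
The plan is to reduce the counting of similarities $\CCC_1\to\CCC_2$ to the counting of symmetries of $\CCC_1$. Since $\CCC_1$ and $\CCC_2$ are similar by hypothesis, fix one similarity $f_0\colon\CCC_1\to\CCC_2$. Then for every similarity $f\colon\CCC_1\to\CCC_2$ the composition $\sigma:=f_0^{-1}\circ f$ is a self-similarity of $\CCC_1$, and Proposition \ref{prop:sym} forces $\sigma$ to be an isometry (since $\CCC_1$, being irreducible and neither a line nor a circle, is certainly not a union of concurrent lines). Conversely, for any symmetry $\sigma$ of $\CCC_1$, the map $f_0\circ\sigma$ is a similarity from $\CCC_1$ to $\CCC_2$. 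This sets up a bijection between $\{f\colon f\text{ similarity},\ f(\CCC_1)=\CCC_2\}$ and the symmetry group of $\CCC_1$. Note also that conjugation by $f_0$ identifies the symmetry groups of $\CCC_1$ and $\CCC_2$, justifying the parenthetical remark in the statement.

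With this reduction, both conclusions follow from a single fact: the symmetry group of $\CCC_1$ is finite and contains the identity, and it is trivial iff there is a unique similarity $\CCC_1\to\CCC_2$. The uniqueness equivalence is then immediate from the bijection above: two similarities $f_0$ and $f_0\circ\sigma$ coincide iff $\sigma$ is the trivial symmetry. So the real work is to prove finiteness of the symmetry group, which I expect to be the main obstacle.

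For finiteness, my plan is to apply Theorem \ref{th-fund} with $\CCC_2=\CCC_1$ and the same proper parametrization $z_1$. Each symmetry corresponds uniquely to a triple $(\bfa,\bfb,\varphi)$ with $|\bfa|=1$ and $\varphi$ a M\"obius transformation, satisfying either \eqref{ec-1} or \eqref{ec-2}. Clearing denominators in these identities produces polynomial equalities in $t$, whose coefficient-by-coefficient vanishing yields a system of polynomial equations in the parameters $(\alpha,\beta,\gamma,\delta,\bfa,\bfb)$ (modulo the scaling action on $\varphi$). These equations cut out an algebraic variety in a parameter space of bounded dimension; if the variety had a positive-dimensional component, there would be a one-parameter analytic family of symmetries of $\CCC_1$. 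Differentiating at the identity would produce a nontrivial Killing field tangent to $\CCC_1$, forcing $\CCC_1$ to be invariant under a one-parameter group of Euclidean isometries. The only irreducible real algebraic plane curves with this property are lines and circles, contradicting our hypothesis. Therefore the solution set is zero-dimensional, hence finite.

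An alternative, more elementary, route for the same step would be to use the description of isometries from the paragraph preceding Theorem \ref{thsim}: each nontrivial symmetry of $\CCC_1$ is a rotation or a reflection. Any two rotation symmetries must share a common fixed point, for otherwise their suitable composition is a translation (and no translation preserves $\CCC_1$, by the B\'ezout argument already invoked). This places all rotation symmetries in a conjugate of $SO(2)$; if the set of rotation angles were infinite, its closure would contain an irrational multiple of $2\pi$, producing a dense orbit of points on $\CCC_1$ under a cyclic group of rotations and hence, by algebraicity, forcing $\CCC_1$ to be a union of concentric circles, again contradicting irreducibility and the hypothesis. Thus only finitely many rotations can occur, and the reflections (if any) together with the rotations form a finite group of dihedral type, completing the proof.
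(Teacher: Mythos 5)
Your reduction is exactly the paper's: fix one similarity $f_0:\CCC_1\to\CCC_2$, observe via Proposition~\ref{prop:sym} that $f_0^{-1}\circ f$ is a symmetry of $\CCC_1$ for any other similarity $f$, and read off both the finiteness and the uniqueness criterion from the resulting bijection with the symmetry group. The genuine difference is in how the key fact --- finiteness of the symmetry group of an irreducible algebraic curve that is neither a line nor a circle --- is handled: the paper simply cites \cite{LRTh} for it, whereas you supply two self-contained arguments. Your second, elementary route (all rotation symmetries share a center, since otherwise a commutator yields a nontrivial translation, which is excluded by B\'ezout; an infinite set of rotation angles would be dense in the circle group, forcing $\CCC_1$ to contain, hence equal, a circle; reflections then only double a finite rotation group) is sound and arguably more informative than the citation, since it also identifies the symmetry group as cyclic or dihedral. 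Your first route has a small wrinkle: a positive-dimensional component of the variety of symmetries need not pass through the identity, so you cannot directly ``differentiate at the identity''; you must first use the group structure (translate the component by the inverse of one of its elements, or invoke that a Zariski-closed subgroup of the isometry group is an algebraic group whose identity component has the same dimension) before extracting the one-parameter subgroup and the Killing field. Since your second route closes the argument without this issue, the proposal as a whole is correct; it just proves what the paper prefers to quote.
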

\begin{proof}
If $f, \tilde{f}$ are two similarities of the plane mapping $\CCC_1$ to $\CCC_2$, then $g := \tilde{f}^{-1}\circ f$ is also a similarity, which maps $\CCC_1$ into itself. As $\CCC_1$ is irreducible and not a line, Proposition~\ref{prop:sym} implies that $g$ is a symmetry of $\CCC_1$. Since an algebraic curve that is neither a line nor a circle has at most finitely many symmetries \cite[\S 5]{LRTh}, there can therefore be at most finitely many similarities mapping $\CCC_1$ to $\CCC_2$.

For the second claim, if $\CCC_1$ only has the trivial symmetry, $\tilde{f}^{-1}\circ f$ is the identity and $\tilde{f} = f$ is the unique similarity mapping $\CCC_1$ to $\CCC_2$. Conversely, if $\CCC_1$ has a nontrivial symmetry $\tilde{g}$, then $f \circ \tilde{g}$ is an additional similarity mapping $\CCC_1$ to $\CCC_2$.
\end{proof}

\section{Detecting similarity between rational curves} \label{sec-detect-2}
\noindent In this section we derive a procedure for detecting whether the curves $\CCC_1, \CCC_2$ are similar by an orientation preserving similarity. In this case, by Theorem \ref{th-fund}, Equation \eqref{ec-1} holds for some similarity $f(z)=\bfa z + \bfb$ and a M\"obius transformation $\varphi$. The case of an orientation reversing similarity is analogous, by replacing $z_2$ by $\overline{z_2}$.

We first attack the simpler case when $z_1, z_2$ are polynomial. After that we consider the case when either $z_1$ or $z_2$ is not polynomial, while distinguishing between $\delta\neq 0$ and $\delta=0$, with $\delta$ as in \eqref{eq:Moebius}. In each case our strategy is to eliminate $\bfa, \bfb$ and reduce to a simpler problem in the parameter space, i.e., downstairs in the diagram \eqref{eq:fundamentaldiagram}. Once we obtain the possible M\"obius transformations downstairs, they can be lifted to corresponding similarities upstairs.

\subsection{The polynomial case} \label{subsec-poly-case}
\noindent A curve is \emph{polynomial} if it admits a polynomial parametrization. It is easy to check whether a rational parametrization defines a polynomial curve, and to quickly compute a polynomial parametrization in that case; see \cite{SWPD} for both statements. As a consequence, we can assume polynomial curves to be polynomially parametrized, without loss of generality and without loss of significant computation power. Notice that if $\CCC_1, \CCC_2$ are similar and one of them is polynomial, then the other must be polynomial as well.

For polynomial parametrizations $z_1, z_2$, the right hand side in \eqref{ec-0} is polynomial, implying that the left hand side is polynomial as well. It follows that in that case the corresponding M\"obius transformation $\varphi$ should be linear affine, i.e., $\varphi(t) = \alpha t + \beta$. In this section we assume that the curves $\CCC_1,\CCC_2$ are related by a similarity whose corresponding M\"obius transformation is of this form. This includes similar polynomial curves as a special case, but also other rational curves needed for the case treated in Section \ref{subcase-2}.

We require some mild assumptions on our parametrization $z_1$, namely that $z_1$, and therefore also its derivatives $z_1', z_1''$, are well defined at $t = 0$ and that $z_1', z_1''$ are nonzero at $t = 0$. Notice that there are only finitely many parameters $t$ for which one of these conditions does not hold, so these conditions hold after applying an appropriate, or even random, linear affine change of the parameter $t$.

Evaluating \eqref{ec-1} at $t = 0$ yields
\begin{equation} \label{p-1}
z_2(\beta)=\bfa z_1(0)+\bfb.
\end{equation}
To get rid of $\bfb$, we differentiate \eqref{ec-1} and evaluate at $t = 0$, which gives
\begin{equation} \label{p-2}
z'_2(\beta)\alpha=\bfa z'_1(0).
\end{equation}
Differentiating \eqref{ec-1} twice and evaluating at $t=0$ yields
\begin{equation} \label{p-3}
z''_2(\beta)\alpha^2=\bfa z''_1(0).
\end{equation}
The unknowns $\bfa$ and $\alpha$ are nonzero since $f$ and $\varphi$ are invertible, and $z_2'(\beta), z_2''(\beta)$ are not identically zero because $\CCC_2$ is not a line. Dividing \eqref{p-3} by \eqref{p-2} yields
\begin{equation} \label{p-4}
\alpha=\displaystyle{\frac{z''_1(0)}{z'_1(0)}\cdot \frac{z'_2(\beta)}{z''_2(\beta)}},
\end{equation}
which does not involve $\bfa$ or $\bfb$.

Since the M\"obius transformation $\varphi(t) = \alpha t + \beta$ maps the real line to itself, both $\alpha$ and $\beta$ are required to be real. That is, their imaginary parts $\Im(\alpha), \Im(\beta)$ are zero. The following lemma shows that this limits $\beta$ to only finitely many candidates.

\begin{lemma} \label{imcero}
If $\Im(\alpha)=0$ holds for infinitely many values of $\beta$, then $\CCC_2$ is a line.
\end{lemma}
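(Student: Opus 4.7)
The plan is to apply the identity principle to the rational function $\Im\alpha(\beta)$, and then to exploit the rationality of $z_2$ to force $\CCC_2$ to be a line. First, since $\alpha(\beta)$ is a rational function of $\beta$ with complex coefficients, $\Im\alpha(\beta)$ is a rational function of $\beta$ with real coefficients; any nonzero rational function has only finitely many zeros, so vanishing at infinitely many values of $\beta$ gives $\Im\alpha\equiv 0$. Setting $c := z_1''(0)/z_1'(0)\neq 0$, this says that $c\cdot z_2'(t)/z_2''(t)$ is real for every $t$ in its domain.

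Next, I would reformulate in real terms. Writing $c = c_1 + ic_2$ and $z_2'(t)\overline{z_2''(t)} = A(t) + iB(t)$ with $A,B\in\RR(t)$, the condition $\Im\alpha\equiv 0$ rewrites as the identity $c_1 B + c_2 A \equiv 0$. Here $B = x_2''y_2' - x_2'y_2''$ is (up to sign) the numerator of the signed curvature of $\CCC_2$, so $B\equiv 0$ is exactly the statement that $\CCC_2$ is a line, while $A = \tfrac12(|z_2'|^2)'$ measures variation of parametric speed. The easy cases are $c_2=0$ (forcing $B\equiv 0$) and $c_1=0$ (forcing $A\equiv 0$, so $|z_2'|$ is constant; then $z_2'$ must be a scalar multiple of a Blaschke-type product $\prod (t-a_j)/(t-\bar a_j)$, and integrating any nontrivial such factor produces a logarithm, so the rationality of $z_2$ forces the product to be trivial and $z_2$ linear).

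The main case, and the main obstacle, is $c_1,c_2\neq 0$. I would use the polar decomposition $z_2'(t) = \sqrt{\rho(t)}\,\e^{i\tau(t)}$ with $\rho = |z_2'|^2$ and $\tau = \arg z_2'$; then $c_1 B + c_2 A \equiv 0$ reduces to $\tau' = k(\log\rho)'$ with $k := c_2/(2c_1)\neq 0$, and differentiating $\log z_2' = \tfrac12\log\rho + i\tau$ gives $z_2''/z_2' = (\tfrac12+ik)\,\rho'/\rho$. Now I would exploit the fact that $z_2''/z_2' = (\log z_2')'$, being the logarithmic derivative of a rational function, has only simple poles with residues equal to the signed multiplicities of zeros and poles of $z_2'$, and hence integer. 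Meanwhile the right-hand side has residue $(\tfrac12+ik)n$ at each zero or pole of $\rho$ of order $n$, whose imaginary part $kn$ must therefore vanish. Since $k\neq 0$, this forces $n=0$ everywhere: $\rho$ has no zeros or poles, so it is a positive real constant, hence $\rho'\equiv 0$, $z_2''\equiv 0$, and $z_2$ is linear, so $\CCC_2$ is a line.
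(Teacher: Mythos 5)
Your proof is correct, and in its decisive step it takes a genuinely different route from the paper's. Both arguments open identically: the identity principle turns the hypothesis into the rational identity $c_1B+c_2A\equiv 0$ with $A=x'x''+y'y''$ and $B=x''y'-x'y''$ (the paper's $bA+aB\equiv 0$ with $a+ib=z_1''(0)/z_1'(0)$), and the degenerate cases are split off the same way: $c_2=0$ forces the curvature numerator $B$ to vanish, while $c_1=0$ forces constant parametric speed, which the paper disposes of by citing Farouki--Sakkalis rather than by your Blaschke-product sketch. The divergence is in the main case $c_1c_2\neq 0$: the paper integrates the resulting ODE to get $\tfrac{b}{2}\ln(x'^2+y'^2)=-a\arctan(x'/y')+k$, recognizes the hodograph as a logarithmic spiral $r=K\e^{-a\theta/b}$, and appeals to the non-algebraicity of that spiral. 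Your residue argument replaces this transcendence appeal with a self-contained algebraic one: both $z_2''/z_2'$ and $\rho'/\rho$ are logarithmic derivatives of rational functions, so their residues are the integer orders of zeros and poles, and comparing imaginary parts of residues across $z_2''/z_2'=(\tfrac12+ik)\rho'/\rho$ forces $k\cdot\mathrm{ord}_{t_0}(\rho)=0$ at every finite zero or pole of $\rho$; since $k\neq 0$ is real, $\rho$ is constant and $z_2''\equiv 0$. This buys you a proof that never leaves the category of rational functions, at the cost of needing the (easy) observation that the pointwise identity on $\RR$ extends to an identity of rational functions. One small caveat: in the sub-case $c_1=0$, the claim that ``integrating any nontrivial factor produces a logarithm'' is immediate for a single simple Blaschke factor, but for a product with higher-order poles the nonvanishing of the residue of $z_2'$ at each pole deserves a word of justification (or simply cite Farouki--Sakkalis, as the paper does).
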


\begin{proof}
Writing $z_1''(0)/z_1'(0) = a + ib$ and $z_2(\beta) = x(\beta) + iy(\beta)$, the condition $\Im(\alpha)=0$ is equivalent to
\[ b\big(x'x''+y'y''\big) + a \big(x''y'-x'y''\big) = 0.\]
If $y'$ is identically $0$, then $\CCC_2$ is a line and the result follows. So assume that $y'$ is not identically $0$. Then the above condition is equivalent to
\[ b \frac{x'x''+y'y''}{y'^2} + a\left(\frac{x'}{y'}\right)' = 0.\]
Changing to a new variable $z=x'/y'$ and dividing by $z^2 + 1$, we obtain
\[ b \frac{zz'}{z^2 + 1} + a\frac{z'}{z^2 + 1} = - b\frac{y''}{y'}. \]
Integrating this equation yields
\begin{equation}\label{eq:lemimalpha}
\frac{b}{2}\mbox{ln}\left(x'^2+y'^2\right) = -a\arctan\left(\frac{x'}{y'}\right) + k,
\end{equation}
for some constant $k$. Since $z''_1(0)$ is nonzero, $a,b$ cannot both be zero. If $b=0$, then $x'/y'$ is constant and $\CCC_2$ is a line. If $b\neq 0$, writing $x' + i y' = r\e^{i\theta}$, Equation~\eqref{eq:lemimalpha} becomes $r = K\e^{-\theta a/b}$ for some nonzero constant $K$. If $a\neq 0$, this curve is a logarithmic spiral, which is a non-algebraic curve and therefore contradicts that $x'$ and $y'$ are rational. If $a = 0$, on the other hand, then the arc length $r = \sqrt{x'^2 + y'^2}$ of $z_2$ is constant, which again implies that $\CCC_2$ is a line \cite{FaroukiSakkalis91}.
\end{proof}

As a consequence, we obtain a polynomial condition $\xi(\beta)=0$ on $\beta$, where
\begin{equation}\label{eq:xi}
\xi(\beta) := b\big(x'(\beta)x''(\beta)+y'(\beta)y''(\beta)\big) + a \big(x''(\beta)y'(\beta)-x'(\beta)y''(\beta)\big)
\end{equation}
is the numerator of the imaginary part of $\alpha$ in \eqref{p-4}, with $a,b,x,y$ as above.
Furthermore, by \eqref{p-4}, any real zero $\beta$ of $\xi$ determines the M\"obius transformation through
\begin{equation}\label{eq:alphaofbetapol}
\alpha(\beta) = \Re\left(\displaystyle{\frac{z''_1(0)}{z'_1(0)}\cdot \frac{z'_2(\beta)}{z''_2(\beta)}}\right),
\end{equation}
as long as $\beta$ is not a zero or pole of $z'_2$ or $z''_2$. Applying this to \eqref{p-2} and \eqref{p-1} yields
\begin{equation}\label{eq:aabbofbetapol}
\bfa(\beta) = \frac{z_2'(\beta)}{z_1'(0)} \alpha(\beta), \qquad
\bfb(\beta) = z_2(\beta) - \bfa(\beta) z_1(0),
\end{equation}
determining the similarity corresponding to $\beta$.

Finally, coming back to \eqref{ec-1}, we can check whether there exists a real zero $\beta$ of $\xi$, such that
\begin{equation}\label{eq:fundrelbeta}
z_2\big(\alpha(\beta)t + \beta\big) - \bfa(\beta) z_1(t) - \bfb(\beta) = 0
\end{equation}
for all $t$. This is a rational function $R(t) = P_1(t)/P_2(t)$, whose coefficients are rational functions of $\beta$. For \eqref{eq:fundrelbeta} to hold for all $t$, the coefficients of the numerator $P_1(t)$, which are rational functions of $\beta$, have to be zero. Taking the GCD of the numerators of these coefficients and of $\xi(\beta)$ gives a polynomial $P(\beta)$. Finally let $Q(\beta)$ be the result of taking out from $P(\beta)$ all the factors that make the numerators and denominators of $\alpha(\beta), \bfa(\beta)$ and the denominator of $\bfb(\beta)$ vanish. Analogously, a polynomial $Q(\beta)$ can be found in the case of an orientation reversing similarity.

In the theorem below, let $Q(\beta)$ be derived as in this section under the assumption that the M\"obius transformation $\varphi$ is linear affine.

\begin{theorem}\label{th-Q}
The curves $\CCC_1,\CCC_2$ are similar if and only if $Q(\beta)$ has at least one real root. More than that, in that case they are symmetric if and only if $Q(\beta)$ has several real roots.
\end{theorem}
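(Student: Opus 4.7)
The plan is to establish a bijection between the real roots of $Q$ and the similarities from $\CCC_1$ to $\CCC_2$ whose associated M\"obius transformation is linear affine, and then combine this bijection with Theorems~\ref{th-fund} and \ref{thsim} to obtain both claims.

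For the forward direction of the first claim, suppose $\CCC_1, \CCC_2$ are similar with $f(z)=\bfa z + \bfb$ and $\varphi(t) = \alpha t + \beta$ satisfying \eqref{ec-1}. The derivations leading to \eqref{p-1}--\eqref{p-4} then force this $\beta$ to satisfy $\xi(\beta) = 0$, and recover $\alpha, \bfa, \bfb$ through \eqref{eq:alphaofbetapol} and \eqref{eq:aabbofbetapol}. Since \eqref{ec-1} holds identically in $t$, so does \eqref{eq:fundrelbeta}, which means that every coefficient (in $\beta$) of the numerator $P_1(t)$, viewed as a polynomial in $t$, must vanish at $\beta$; hence $\beta$ is a root of $P$. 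As $\bfa, \alpha$ are nonzero and the various denominators appearing in \eqref{eq:alphaofbetapol}--\eqref{eq:aabbofbetapol} must also be nonzero at this $\beta$, none of the factors discarded in passing from $P$ to $Q$ vanish there, so $Q(\beta)=0$.

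Conversely, given a real root $\beta_0$ of $Q$, the formulas \eqref{eq:alphaofbetapol}--\eqref{eq:aabbofbetapol} produce well-defined values $\alpha(\beta_0), \bfa(\beta_0), \bfb(\beta_0)$ with $\alpha(\beta_0), \bfa(\beta_0)$ nonzero, while $\xi(\beta_0)=0$ forces $\alpha(\beta_0)$ to be real, so $\varphi(t) := \alpha(\beta_0) t + \beta_0$ is a genuine real M\"obius transformation. Vanishing of $P$ at $\beta_0$ guarantees that \eqref{eq:fundrelbeta}, and hence \eqref{ec-1}, holds for all $t$, and Theorem~\ref{th-fund} then yields that $\CCC_1, \CCC_2$ are similar. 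Moreover, by Theorem~\ref{th-fund} the M\"obius transformation associated with a given similarity is unique, and in our setting $\varphi$ is in turn determined by $\beta = \varphi(0)$; the two directions therefore exhibit the desired bijection. The second claim follows by counting: if $\CCC_1 \neq \CCC_2$, Theorem~\ref{thsim} says the number of similarities is $1$ exactly when $\CCC_1$ has only the trivial symmetry, so having several real roots of $Q$ is equivalent to the existence of a nontrivial symmetry. If $\CCC_1 = \CCC_2$, then by Proposition~\ref{prop:sym} the similarities are symmetries, the identity supplies one real root, and any additional real root corresponds to a nontrivial symmetry.

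The delicate step is the bookkeeping in the transition from $P$ to $Q$: one must verify that the factors stripped away to form $Q$ are precisely those $\beta$ at which \eqref{eq:alphaofbetapol}--\eqref{eq:aabbofbetapol} fail to produce a valid (nondegenerate, real) similarity, and never those at which an actual similarity lives. Once this correspondence is pinned down, the bijection above, and hence both parts of the theorem, follow at once.
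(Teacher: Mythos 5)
Your proof is correct and follows essentially the same route as the paper's: the forward direction by tracing the construction of $Q$ from \eqref{p-1}--\eqref{p-4} and checking that no discarded factor vanishes at the relevant $\beta$, the converse by reconstructing $f$ and $\varphi$ from a real root and invoking Theorem~\ref{th-fund}, and the second claim by counting similarities via Theorem~\ref{thsim}. You merely spell out in more detail the bijection between real roots of $Q$ and similarities (and the $\CCC_1=\CCC_2$ case) that the paper leaves implicit in its one-line appeal to Theorem~\ref{thsim}.
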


\begin{proof}
If $\CCC_1,\CCC_2$ are similar, then Theorem \ref{th-fund} implies that there exists a similarity $f$ and a M\"obius transformation $\varphi$ for which either \eqref{ec-1} or \eqref{ec-2} holds. By construction, and since $\alpha,\beta$ can be taken real, this implies that $Q(\beta)$ has a real zero. Conversely, if $\beta$ is a real zero of $Q(\beta)$, then $\alpha=\alpha(\beta)$ in \eqref{eq:alphaofbetapol} is also real, \eqref{eq:alphaofbetapol} -- \eqref{eq:aabbofbetapol} are well defined, and $f(z) = \bfa(\beta) z + \bfb(\beta)$ is a similarity that satisfies \eqref{ec-1} with $\varphi(t) = \alpha(\beta)t + \beta$. Theorem \ref{th-fund} then implies that $\CCC_1, \CCC_2$ are similar. The second statement follows from Theorem \ref{thsim}.
\end{proof}

Observe that $Q(\beta)$ cannot be identically zero: Indeed, in that case by Theorem \ref{th-Q} we would have infinitely many similarities between $\CCC_1$ and $\CCC_2$, therefore contradicting Theorem \ref{thsim}. The number of real roots of $Q(\beta)$ can be found using Sturm's Theorem \cite[\S 2.2.2]{Basu}. Additionally, there are many efficient algorithms for isolating the real roots of a polynomial, see for instance \cite[Algorithm 10.4]{Basu}, that are implemented in most popular computer algebra systems.

Thus we obtain a recipe, spelled out as Algorithm {\SimilarPol}, for detecting whether two rational curves are related by a similarity whose corresponding M\"obius transformation is linear affine, which is the case for all similar polynomial curves.

\begin{algorithm}
\begin{algorithmic}[1]
\REQUIRE Two proper rational parametrizations $z_1, z_2: \RR\dashrightarrow \CC$ of curves $\CCC_1,\CCC_2$ of equal degree greater than one, such that $z_1$ is well defined at $t=0$ and $z_1', z_1''$ are nonzero at $t=0$.
\ENSURE Returns whether $\CCC_1, \CCC_2$ are related by a similarity that corresponds to a linear affine change of the parameter downstairs in \eqref{eq:fundamentaldiagram}.
\STATE \emph{Look for orientation preserving similarities.}
\STATE Let $\xi(\beta)$ be the numerator of $\Im(\alpha)$, with $\alpha$ from \eqref{p-4}.
\STATE Find $\alpha(\beta)$ from \eqref{eq:alphaofbetapol}.
\STATE Find $\bfa(\beta)$ and $\bfb(\beta)$ from \eqref{eq:aabbofbetapol}.
\STATE Let $P(\beta)$ be the GCD of $\xi(\beta)$, and all the polynomial conditions obtained when substituting $\alpha(\beta), \bfa(\beta), \bfb(\beta)$ into \eqref{ec-1}.
\STATE Let $Q(\beta)$ be the result of taking out from $P(\beta)$
any factor shared with a denominator of $\alpha(\beta),\bfa(\beta),\bfb(\beta)$ or a numerator of $\alpha(\beta), \bfa(\beta)$.
\STATE If $Q(\beta)$ has real roots, return \texttt{TRUE}.
\STATE \emph{Look for orientation reversing similarities.}
\STATE Replace $z_1\leftarrow \bar{z}_1$ and proceed as in lines 1 -- 7.
\STATE If the above step did not return \texttt{TRUE}, then return \texttt{FALSE}.
\end{algorithmic}
\caption*{{\bf Algorithm} {\SimilarPol}}
\end{algorithm}

Although we have not made it explicit in the above algorithm, one can compute the similarities mapping $\CCC_1$ to $\CCC_2$, either symbolically, i.e., in terms of a real root $\beta$ of $Q(\beta)$, or numerically, at the additional cost of approximating the real roots of $Q(\beta)$ with any desired precision.

\subsection{The rational case I: $\delta \neq 0$} \label{subcase-1}
\noindent Next assume that $\CCC_1, \CCC_2$ are not polynomial. In that case, we can no longer assume that the M\"obius transformation $\varphi$ in \eqref{eq:Moebius} is linear affine. We start considering the case when $\varphi$ satisfies $\delta\neq 0$. After performing a linear affine change of the parameter $t$ if necessary, we may and will assume that $z_1$ is well defined at $t=0$ and that $z_1', z_1''$ are nonzero at $t = 0$. Furthermore, after dividing the coefficients of $\varphi$ by $\delta$, we can assume that $\delta = 1$ and that $\alpha,\beta,\gamma$ are real.

Evaluating \eqref{ec-1} at $t = 0$ again gives \eqref{p-1}, but differentiating \eqref{ec-1} now yields
\begin{equation} \label{dif-1}
z_2'(\varphi(t))\cdot \frac{\Delta}{(\gamma t+1)^2}=\bfa z'_1(t).
\end{equation}
Evaluating \eqref{dif-1} at $t=0$, we get
\begin{equation}\label{ig-2}
z'_2(\beta)\cdot \Delta = \bfa z_1'(0),
\end{equation}
while differentiating \eqref{dif-1} and evaluating at $t=0$ gives
\begin{equation}\label{ig-4}
\Delta \big(\Delta z''_2(\beta)-2\gamma z'_2(\beta)\big)=\bfa z_1''(0).
\end{equation}
Dividing \eqref{ig-4} by \eqref{ig-2}, using that $z_1'(0), \Delta\neq 0$, and solving for $\gamma$ gives
\begin{equation}\label{gam}
\gamma = -\frac{z''_1(0)}{2z'_1(0)} + \Delta \frac{z''_2(\beta)}{2z'_2(\beta)}.
\end{equation}

Assuming $\delta = 1$ forces $\gamma$ (and also $\alpha, \beta$) to be real. Writing
\[ -\frac{z_1''(0)}{2z_1'(0)} = A + Bi,\qquad \frac{z_2''(\beta)}{2z_2'(\beta)} = C(\beta) + D(\beta)i, \]and since $\Delta$ is also real, we have $B+\Delta\cdot D(\beta)=0$, and therefore
\begin{equation}\label{eq:delta.gamma.alpha}
\Delta(\beta) = -\frac{B}{D(\beta)},\quad \gamma(\beta) = A + \Delta(\beta) C(\beta),\quad \alpha(\beta) = \Delta(\beta) + \beta\gamma(\beta).
\end{equation}
The following lemma proves that the above expressions are well defined.

\begin{lemma} \label{nozero}
If $\Im\left(\displaystyle{\frac{z''_2(\beta)}{z'_2(\beta)}}\right) = 0$ holds for infinitely many $\beta$, then $\CCC_2$ is a line.
\end{lemma}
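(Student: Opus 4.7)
The plan is to translate the vanishing of $\Im(z_2''/z_2')$ into a polynomial (or rather, rational-function) identity in $\beta$, and then recognize that identity as forcing the tangent direction of $\CCC_2$ to be constant.

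First I would write $z_2(\beta) = x(\beta) + i y(\beta)$ with $x, y$ rational functions, and compute
\begin{equation*}
\frac{z_2''(\beta)}{z_2'(\beta)} = \frac{\big(x''(\beta) + i y''(\beta)\big)\big(x'(\beta) - i y'(\beta)\big)}{x'(\beta)^2 + y'(\beta)^2},
\end{equation*}
so that
\begin{equation*}
\Im\!\left(\frac{z_2''(\beta)}{z_2'(\beta)}\right) = \frac{x'(\beta) y''(\beta) - x''(\beta) y'(\beta)}{x'(\beta)^2 + y'(\beta)^2}.
\end{equation*}
The numerator is a rational function of $\beta$. Since by hypothesis it vanishes at infinitely many values of $\beta$, it must vanish identically, i.e., $x' y'' - x'' y' \equiv 0$.

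Next I would split on whether $x'\equiv 0$. If so, then $x$ is constant and $\CCC_2$ is a vertical line, hence we are done. Otherwise $x'$ is a nonzero rational function, and away from its finitely many zeros we may write
\begin{equation*}
\left(\frac{y'}{x'}\right)' = \frac{x' y'' - x'' y'}{(x')^2} \equiv 0,
\end{equation*}
so $y'/x'$ is equal to a constant $c\in\RR$ on a cofinite set, and therefore identically. Integrating $y' = c x'$ gives $y = c x + d$ for some constant $d$, so the image of $z_2$ lies on the line $y = c x + d$. Since $z_2$ is a proper parametrization of $\CCC_2$, this forces $\CCC_2$ itself to be a line.

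There is no real obstacle here: the argument is much shorter than the proof of Lemma~\ref{imcero} because the condition now involves only the Wronskian $x'y'' - x''y'$, without the extra term weighted by the real part coming from $z_1''(0)/z_1'(0)$. The only care needed is the observation that a rational function vanishing at infinitely many points is identically zero, which legitimises the passage from ``infinitely many $\beta$'' to an identity, and the minor case distinction on whether $x'\equiv 0$.
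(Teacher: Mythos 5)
Your proof is correct and follows essentially the same route as the paper: both compute $\Im(z_2''/z_2') = (x'y''-x''y')/(x'^2+y'^2)$, identify the numerator (up to the factor $x'^2$) as the derivative of $y'/x'$, and conclude that either $x'\equiv 0$ or $y'/x'$ is constant, so $\CCC_2$ is a line. Your version merely spells out the integration step and the ``infinitely many zeros of a rational function'' argument a bit more explicitly than the paper does.
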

\begin{proof}
Writing $z_2 (\beta) = x (\beta) + i y (\beta)$, one observes
\[
\frac{z''_2}{z'_2}=\frac{x''+iy''}{x'+iy'}=\frac{x''x'+y'y''}{x'^2+y'^2}+i \frac{x'y''- x''y'}{x'^2+y'^2}.
\]
Therefore
\[
\Im\left(\frac{z''_2}{z'_2}\right) =\frac{x'y''-x''y'}{x'^2+y'^2}=\frac{x'^2}{x'^2+y'^2}\cdot \frac{x'y''-x''y'}{x'^2}=\frac{x'^2}{x'^2+y'^2}\cdot \frac{\rmd}{\rmd\beta}\left(\frac{y'}{x'}\right),
\]
which is identically zero if and only if either $x'$ is identical to zero or $y'/x'$ is identical to a constant. In either case one deduces that $\CCC_2$ is a line.
\end{proof}

Hence the similarity corresponding to $\varphi$ is found from \eqref{ig-2} and \eqref{p-1}, i.e.,
\begin{equation}\label{eq:aabbfrombetagen}
\bfa(\beta) = \frac{z_2'(\beta)}{z_1'(0)} \Delta(\beta),\qquad \bfb(\beta) = z_2(\beta) - \bfa(\beta)z_1(0).
\end{equation}
Finally, substituting $\alpha(\beta), \gamma(\beta), \bfa(\beta), \bfb(\beta)$ into \eqref{ec-1}, and taking the GCD of the numerators as before, we again reach a polynomial condition $P(\beta) = 0$. Let $Q(\beta)$ be the polynomial obtained from $P(\beta)$ by taking out the factors that make a denominator of $\Delta(\beta), \alpha(\beta), \gamma(\beta), \bfa(\beta), \bfb(\beta)$ or a numerator of $\Delta(\beta), \bfa(\beta)$ vanish. We conclude that Theorem \ref{th-Q} also holds for similarities of this type, with $Q(\beta)$ defined under the assumption that the M\"obius transformation $\varphi$ has coefficient $\delta\neq 0$.

\subsection{The rational case II: $\delta=0$} \label{subcase-2}
\noindent The remaining case happens when $\CCC_1, \CCC_2$ are related by a similarity whose corresponding M\"obius transformation \eqref{eq:Moebius} satisfies $\delta = 0$. Then $\gamma\neq 0$, and we may and will assume $\gamma = 1$ and therefore that $\alpha,\beta$ are real. Equation \eqref{ec-1} becomes
\[ z_2(\alpha + \beta/t) = \bfa z_1(t) + \bfb. \]
Making the change of parameter $t\longmapsto 1/t$, and defining $\tilde{z}_1(t) := z_1(1/t)$, we get \[z_2(\alpha+\beta t)=\bfa \tilde{z}_1(t)+\bfb\]
It follows that $\CCC_1, \CCC_2$ are related by a similarity that corresponds to a linear affine change of parameters downstairs, which is the case of Section \ref{subsec-poly-case}, with $\tilde{z}_1$ satisfying the same conditions as $z_1$. The similarities can then be detected by applying Algorithm {\SimilarPol}.

We thus arrive at a recipe for detecting whether two rational curves are similar, spelled out as Algorithm {\SimilarGen}. As in the case of {\SimilarPol}, the corresponding similarities can be computed explicitly.

\begin{algorithm}
\begin{algorithmic}[1]
\REQUIRE Two proper rational parametrizations $z_1,z_2:\RR\dashrightarrow \CC$ of curves $\CCC_1, \CCC_2$ of equal degree, none of which is a line or a circle, such that $z_1(t), z_1(1/t)$ are well defined at $t=0$ and $z_1'(t), z_1''(t), \frac{\rmd}{\rmd t} z_1(1/t), \frac{\rmd^2}{\rmd t^2} z_1(1/t)$ are nonzero at $t = 0$.
\ENSURE Returns whether $\CCC_1, \CCC_2$ are related by a similarity.
\STATE \emph{Look for orientation preserving similarities with $\delta\neq 0$.}
\STATE Find $\Delta(\beta), \gamma(\beta), \alpha(\beta)$ from \eqref{eq:delta.gamma.alpha}.
\STATE Find $\bfa(\beta), \bfb(\beta)$ from \eqref{eq:aabbfrombetagen}.
\STATE Let $P(\beta)$ be the GCD of all the polynomial conditions obtained when substituting $\alpha(\beta),\gamma(\beta),\bfa(\beta),\bfb(\beta)$ in \eqref{ec-1}.
\STATE Let $Q(\beta)$ the result of taking out from $P(\beta)$ any factor shared with a denominator of $\Delta(\beta),\alpha(\beta),\gamma(\beta),\bfa(\beta),\bfb(\beta)$ or a numerator of $\Delta(\beta), \bfa(\beta)$.
\STATE If $Q(\beta)$ has a real root, return \texttt{TRUE}.
\STATE \emph{Look for orientation reversing similarities with $\delta \neq 0$.}
\STATE Replace $z_1\leftarrow \bar{z}_1$ and proceed as in lines 1 -- 6.
\STATE \emph{Look for the remaining similarities with $\delta=0$.}
\STATE Replace $(\alpha,\beta) \leftarrow (\beta, \alpha)$ and $z_1(t) \leftarrow z_1(1/t)$.
\STATE Return the result of {\SimilarPol}.
\end{algorithmic}
\caption*{{\bf Algorithm} {\SimilarGen}}
\end{algorithm}

\subsection{The similarity type}\label{sec:similaritytype}
\noindent Notice that if two curves $\CCC_1, \CCC_2$ are identified as similar by Algorithm {\SimilarPol} or {\SimilarGen}, and if $\beta$ is a real root of $Q$, then $\bfa(\beta)$ and $\bfb(\beta)$ define a similarity transforming $\CCC_1$ into $\CCC_2$. The nature of the similarity $f$ can be found as follows by analyzing the fixed points.

In case $f$ is orientation preserving, any fixed point $z_0$ satisfies $(1 - \bfa) z_0 = \bfb$. If $\bfa = 1$ and $\bfb = 0$ then every $z_0\in \CC$ is a fixed point, and $f$ is the identity. If $\bfa = 1$ and $\bfb\neq 0$ then there are no fixed points, and $f$ is a translation by $\bfb$. If $\bfa\neq 1$, then there is a unique fixed point $z_0 = \bfb/(1 - \bfa)$. Writing $\bfa = r\e^{i\theta}$, the similarity is a counter-clockwise rotation by an angle $\theta$ around the origin, followed by a scaling by $r$ and a translation by $\bfb$.

In case $f$ is orientation reversing, any fixed point $z_0$ satisfies $z_0 - \bfa \overline{z}_0 = \bfb$. If $\bfa = 1$ and $\bfb = 0$, then the $x$-axis is invariant and $f$ is a reflection in the $x$-axis. If $\bfa = 1$ and $\bfb \neq 0$ is real, then the $x$-axis is invariant and $f$ is a glide reflection, which first reflects in the $x$-axis and then translates by $\bfb$. If $\bfa = 1$ and $\Im(\bfb)\neq 0$, then there are no fixed points and $f$ is a reflection in the $x$-axis, followed by a translation by $\bfb$. If $\bfa\neq 1$, write $\bfa = r\e^{i\theta}$. If $r = 1$ then there is a line of fixed points and $f$ is an isometry that first reflects in the $x$-axis, then rotates counter-clockwise by $\theta$, and then translates by $\bfb$. If $r\neq 1$ then there is a unique fixed point $z_0 = (\bfa\overline{\bfb} + \bfb)/(1 - |\bfa|^2)$, and $f$ first reflects in the $x$-axis, rotates counter-clockwise by $\theta$ around the origin, scales by $r$, and finally translates by $b$.

\subsection{Piecewise rational curves}
\noindent Let us now assume that $\CCC_1, \CCC_2$ are piecewise rational curves. This situation is common in Computer Aided Geometric Design, for instance, where objects are usually modeled by means of (rational) B\'ezier curves or NURBS, which are piecewise rational. In this setting, $\CCC_1, \CCC_2$ are parametrized by $\bfp_1(t),\bfp_2(t)$, where
\[
\begin{array}{ccc}
\bfp_1(t) := \left\{\begin{array}{cl}
\bfx_1(t) & \mbox{if } t\in[t_1,t_2],\\
\bfx_2(t) & \mbox{if } t\in[t_2,t_3],\\
\vdots & \qquad \vdots\\
\bfx_m(t) & \mbox{if } t\in[t_m,t_{m+1}],
\end{array}\right. &  &
\bfp_2(t) := \left\{\begin{array}{cl}
\bfy_1(t) & \mbox{if } t\in[t_1,t_2],\\
\bfy_2(t) & \mbox{if } t\in[t_2,t_3],\\
\vdots & \qquad \vdots \\
\bfy_n(t) & \mbox{if } t\in[t_n,t_{n+1}],
\end{array}\right.
\end{array}\]
for some integers $m,n\geq 1$ and rational functions $\bfx_1,\ldots,\bfx_m$ and $\bfy_1,\ldots\bfy_n$.

We want to detect whether $\CCC_1, \CCC_2$ are similar, or perhaps whether $\CCC_1, \CCC_2$ are \emph{partially} similar, meaning that some part of $\CCC_1$ is similar to some part of $\CCC_2$. For this purpose we need to compare pieces of the curves, which are each described by a (rational) parametrization on a certain parameter interval, both of which need to be taken into account. Let $\bfx_i(I)$, $\bfy_j(J)$ be two such pieces on the intervals $I:=[t_i,t_{i+1}]$, $J:=[t_j,t_{j+1}]$. Then $\bfx_i(I)$ and $\bfy_j(J)$ are similar if and only if (1) the whole curves $\bfx_i(\RR)$ and $\bfy_j(\RR)$ are similar, and (2) the corresponding M\"obius transformation $\varphi$ satisfies that $\varphi(I)=J$. For $\CCC_1$ and $\CCC_2$ to be similar in this setting, every piece of $\CCC_1$ must be a similar with a piece of $\CCC_2$ and conversely, all with he same similarity. If (1) holds and (2) is replaced by the condition $J' := \varphi(I)\cap J\neq \emptyset$, then we have a partial similarity, in the sense that $\bfy_j\big(J'\big)$ is similar to $\bfx_i\big(\varphi^{-1}(J')\big)$.

\subsection{Computing symmetries}\label{sec:symmetry}
\noindent From Proposition \ref{prop:sym}, we can find the symmetries of $\CCC$ by applying Algorithms {\SimilarPol} and {\SimilarGen} with both arguments the parametrization $z_1 = z_2$ of $\CCC$. Furthermore, the nature of the symmetry can be deduced from the set of fixed points, as observed in Section \ref{sec:similaritytype}. Since an irreducible algebraic curve different from a line cannot have a nontrivial translation symmetry or glide-reflection symmetry, we are left with the cases mentioned in Section \ref{sec-prelim}.

\section{Implementation and experimentation} \label{sec-implem}
\noindent Algorithms {\SimilarPol} and {\SimilarGen} were implemented in {\Sage} \cite{sage}, using {\Singular} \cite{singular} and {\Flint} \cite{flint} as a back-end. The corresponding worksheet ``Detecting Similarity of Rational Plane Curves'' is available online for viewing at the third author's website \cite{WebsiteGeorg} and at \textsf{https://cloud.sagemath.org}, where it can be tried online once \textsf{SageMath Cloud} supports public worksheets.

\subsection{An example: computing the symmetries of the deltoid}\label{sec:deltoidsym}

\begin{figure}
\centering
\begin{subfigure}[b]{0.45\textwidth}
  \centering
  \includegraphics[scale=0.8]{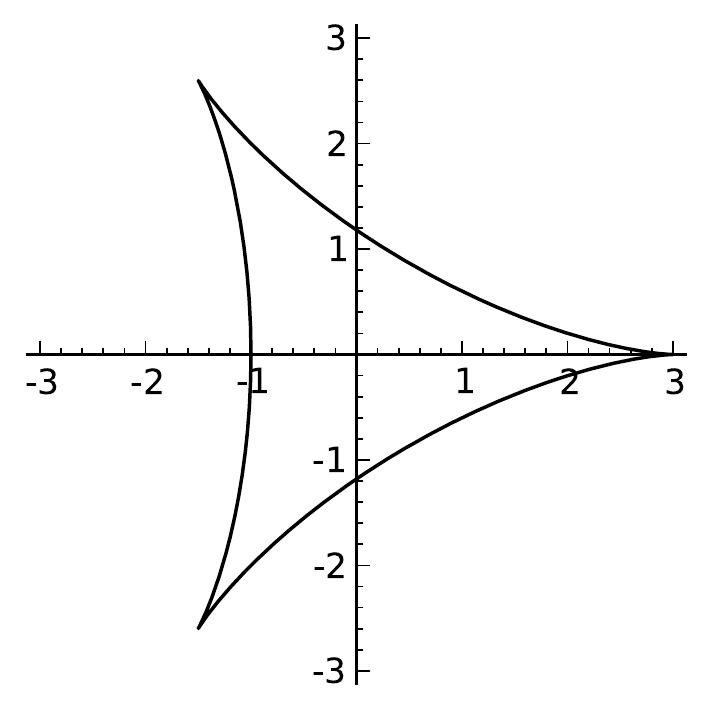}
  \caption{ }\label{fig:deltoid1}
\end{subfigure}%
\qquad
\begin{subfigure}[b]{0.45\textwidth}
  \centering
  \includegraphics[scale=0.8]{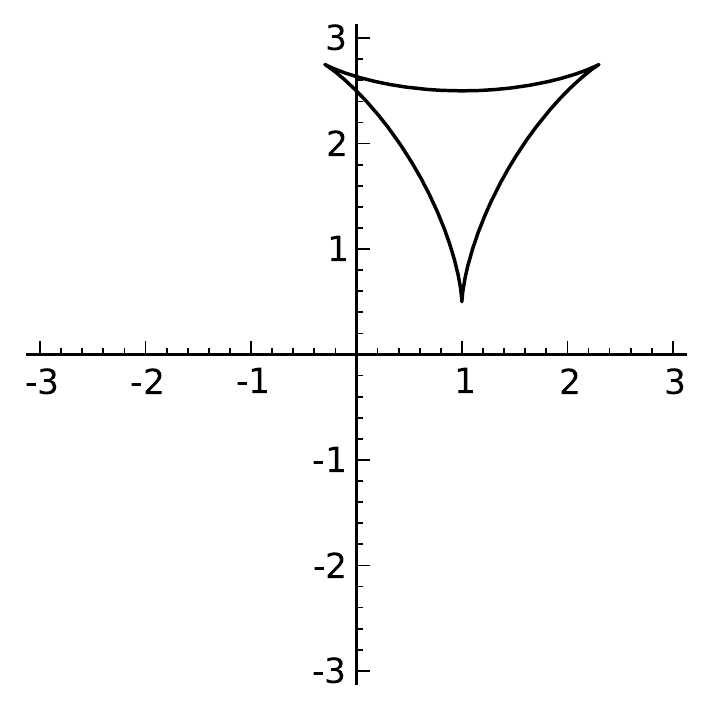}
  \caption{ }\label{fig:deltoid2}
\end{subfigure}%
\caption{The deltoid given as the image of \eqref{eq:deltoid} (left) and a related deltoid obtained by a similarity (right).}\label{fig:deltoid}
\end{figure}

\noindent Let $\CCC_1 \subset \RR^2$ be the \emph{deltoid}, defined parametrically as the image of the rational map
\begin{equation}\label{eq:deltoid}
\phi = (\phi_1, \phi_2): \RR\longrightarrow \RR^2,\qquad t\longmapsto \left(\frac{-t^4 - 6t^2 + 3}{(t^2 + 1)^2}, \frac{8t^3}{(t^2+1)^2} \right),
\end{equation}
or implicitly as the zero locus of the equation
\[ (x^2+y^2)^2 - 8(x^3-3xy^2) + 18(x^2 + y^2) - 27 = 0.\]
As can be seen in Figure \ref{fig:deltoid1}, this curve is invariant under the symmetric group $S_3$ with six elements, which is generated by a 120$^\circ$-rotation around the origin and the reflection in the horizontal axis.

Since $\phi'(0) = (0, 0)$, this curve needs to be reparametrized before we can apply the algorithm. One checks that $z_1 = \phi_1(t-1) + \phi_2(t-1)i$ satisfies the conditions of the algorithm. To detect the symmetries of $\CCC_1$, we let $z_2 := z_1$ and apply the algorithm. We find $\Delta = \frac{1}{2}(\beta^2 - 2\beta + 2)$,
\[\gamma(\beta) = -\frac{\beta(2\beta^3 - 3\beta^2 - 18\beta + 22)}{4(\beta - 1)(\beta^2 - 2\beta - 2)},\]
\[\alpha(\beta) = -\frac{7\beta^4 - 34\beta^3 + 30\beta^2 + 8\beta - 8}{4(\beta - 1)(\beta^2 - 2\beta -2)},\]
\begin{equation}\label{eq:aaofbeta}
\bfa(\beta) = \frac{(\beta - 1)(\beta^2 - 2\beta - 2)}{(\beta^2 - 2\beta + 2)^2}\big(2 - \beta - \beta i\big),
\end{equation}
\begin{equation}\label{eq:bbofbeta}
\quad \bfb(\beta) = 3\frac{\beta(\beta^2 - 6\beta + 6)}{(\beta^2 - 2\beta + 2)^2}\big(1 - (\beta - 1)i \big)
\end{equation}
as functions of $\beta$. Substituting these expressions into
\[ z_2\left(\frac{\alpha(\beta) t + \beta}{\gamma(\beta) t + 1}\right) - \bfa(\beta) z_1(t) + \bfb(\beta)\]
yields a rational function in $t$ whose coefficients are rational functions in $\beta$. Its numerator is a polynomial in $t$, and taking the GCD of the numerators of its coefficients, we find
\[\beta(\beta - 1)(\beta^2 - 6\beta + 6)(\beta^2 - 2\beta - 2).\]
We are only interested in the $\beta$ for which the expressions for $\alpha, \gamma, \Delta, \bfa$ and $\bfb$ are well-defined and for which the M\"obius transformation and the similarity are invertible, so we discard the factors corresponding to the poles of $\alpha, \gamma, \Delta, \bfa, \bfb$ and zeros of $\Delta, \bfa$, leaving
\[ \beta(\beta^2 - 6\beta + 6). \]
Substituting the three zeros of this equation into Equations \eqref{eq:aaofbeta} and \eqref{eq:bbofbeta}, we find three orientation preserving symmetries $f: z\longmapsto \bfa z + \bfb$, with $\bfa = 1, \e^{2\pi i/3}, \e^{4\pi i/3}$ and $\bfb = 0$, corresponding to rotations by 0$^\circ$, 120$^\circ$, and 240$^\circ$. Repeating the procedure with $z_1\mapsto \overline{z}_1$, we find the remaining three orientation reversing symmetries $f: z\longmapsto \bfa \overline{z} + \bfb$ in $S_3$ with again $\bfb = 0$ and $\bfa = 1, \e^{2\pi i/3}, \e^{4\pi i/3}$ and $\bfb = 0$.

\subsection{An example: detecting similarities between two deltoids}
\noindent Next, we let $z_1$ be as in Section \ref{sec:deltoidsym} and define
\[ z_2: \RR\longrightarrow \CC,\qquad t\longmapsto \frac{t^4 + 4t^3 + 2t^2 + 1}{(t^2 + 1)^2} + \frac{5t^4 + 14t^2 + 1}{2(t^2 + 1)^2} i.
\]
The corresponding curves $\CCC_1$ and $\CCC_2$ are shown in Figure \ref{fig:deltoid}. Applying the Algorithm {\SimilarGen}, we obtain three orientation preserving similarities $f(z) = \bfa z + \bfb$ and three orientation reserving similarities $f(z)=\bfa \overline{z} + \bfb$, each with
$\bfb = 1 + 2i$ and $\bfa \in \big\{-\frac{1}{2} i, \frac{1}{2}\e^{\pi i/6}, \frac{1}{2}\e^{5\pi i/6}\big\}$.
A direct computation yields
\[ z_2(t - 1) = -\frac{1}{2} i\big(\phi_1(t - 1) + \phi_2(t - 1) i\big) + 1 + 2i = -\frac{1}{2}iz_1(t) + 1 + 2i,\]
confirming that \eqref{ec-1} holds with $\bfa = -i/2, \bfb = 1 + 2i$ and $\varphi(t) = t - 1$.

\subsection{Observations on the complexity}
\noindent For simplicity let us determine the complexity of Algorithm {\SimilarPol}; the analysis of Algorithm {\SimilarGen} is completely analogous. In addition to the standard Big O notation $\OOO$, we use the \emph{Soft O} notation $\tOOO$ to ignore any logarithmic factors in the complexity analysis.

Let $z_1,z_2$ be polynomials of degree at most $d$. Lines 1--4 can be carried out in $\OOO(d)$ (integer) operations, resulting in a polynomial $\xi$ and rational functions $\alpha, \bfa$ and $\bfb$ whose numerators and denominators have degrees $\OOO(d)$. Observe that the bottleneck is line 5, where we substitute $\alpha(\beta)$ into Equation \eqref{ec-1}. Write
\[
\alpha(\beta)
 = \Re\left( \frac{z''_1(0)}{z'_1(0)}\cdot \frac{z'_2(\beta)}{z''_2(\beta)}\right)
 =: \frac{\alpha_1(\beta)}{\alpha_2(\beta)},
\]
for some relatively prime polynomials $\alpha_1, \alpha_2$ of degree at most $2d - 3$. For \eqref{eq:fundrelbeta} to hold identically for all $t$, we write $z_2(t) = c_d t^d+\cdots +c_1t+c_0$ and compute
\begin{align*}
\sum_{l=0}^d P_l(\beta) t^l
  & := \alpha_2^d(\beta) \Big[ z_2\big(\alpha(\beta)t+\beta\big) - \bfa(\beta) z_1(t) - \bfb(\beta) \Big]\\
  & = \sum_{l=0}^d \left[\sum_{k = l}^d c_k {l\choose k}\alpha_2^{d-l}(\beta) \alpha_1^l(\beta) \beta^{k-l}\right] t^l - \alpha_2^d(\beta) \Big[\bfa(\beta) z_1(t) + \bfb(\beta) \Big].
\end{align*}

Each polynomial $P_l$ has degree $\OOO(d^2)$, and can be computed in $\tOOO(d^3)$ operations by using binary exponentiation and FFT-based multiplication \cite[\S 8.2]{VonZurGathen.Gerhard}. The polynomial $\xi$ from \eqref{eq:xi} has degree at most $2d - 3$ and can be found in $\tOOO(d)$ operations. Thus the polynomials $\xi, P_0, P_1,\ldots, P_d$ can be computed in $\tOOO(d^4)$ operations. The GCD of two polynomials of degree at most $d$ can be computed in $\tOOO(d)$ operations \cite[Corollary 11.6]{VonZurGathen.Gerhard}. The GCD of the polynomials $\xi, P_0, P_1,\ldots, P_d$ can therefore be computed in $\tOOO(d^3)$ operations and has degree $\OOO(d)$. At line 6 one takes the GCD $Q$ of polynomials of degree $\OOO(d)$, which requires $\tOOO(d)$ operations. To check, at the next line, whether $Q$ has real roots can be done by root isolation, which takes $\tOOO(d^3)$ operations \cite[Theorem 17]{KS}. We conclude that the overall complexity is $\tOOO(d^4)$. The analysis of Algorithm {\SimilarGen} is cumbersome, but can be carried out similarly.

Of course the theoretical complexity is only a proxy for the practical performance of the algorithm, which also depends on the architecture, the precise implementation, and the bitsizes of the coefficients of the parametrizations.

\subsection{Performance}
\noindent Let the \emph{bitsize} $\tau := \lceil \log_2 k \rceil + 1$ of an integer $k$ be the number of bits needed to represent it. For various degrees $d$ and bitsizes $\tau$, we test the performance of Algorithms {\SimilarPol} and {\SimilarGen} by applying them to a pair of parametrized curves of equal degree $d$ and bitsizes of their coefficients bounded by $\tau$. For every pair $(d,\tau)$ this is done a number of times, and the average execution time $t$ (CPU time) is listed in Tables \ref{tab:performance_pol} and \ref{tab:performance_gen} for a Dell XPS 15 laptop, with 2.4 GHz i5-2430M processor and 6 GB RAM.

Double logarithmic plots of the CPU times against the degrees and against the coefficient bitsizes are presented in Figures \ref{fig:CPU-pol} and \ref{fig:CPU-gen}. In these plots the CPU times lie on a curve that seems to asymptotically approach a straight line, suggesting an underlying power law. The least-squares estimates of these asymptotes gives the power laws
\begin{equation}\label{eq:leastsquares}
t_\mathrm{pol} = 2.17\cdot 10^{-11} d^{9.28} \tau^{1.59},\qquad
t_\mathrm{gen} = 6.2\cdot 10^{-06} d^{9.91} \tau^{1.91}
\end{equation}
for the CPU times $t_\mathrm{pol}$ of {\SimilarPol} and $t_\mathrm{gen}$ of {\SimilarGen}.

We should emphasize that these timings are for \emph{dense} polynomials. The performance is much better in practice than Tables \ref{tab:performance_pol}, \ref{tab:performance_gen} and Figures \ref{fig:CPU-pol}, \ref{fig:CPU-gen} suggest. To illustrate this, we find the symmetries of various well-known curves $z_1: \RR\longrightarrow \CC$, namely the \emph{folium of Descartes},
\[ t\longmapsto 3t \frac{1 + t i}{1+t^3}, \]
the \emph{lemniscate of Bernoulli},
\[ t\longmapsto \frac{(3t^4 + 2t^3 - 2t - 3) + (t^4 + 6t^3 - 6t - 1)i}{5t^4 + 12t^3 + 30t^2 + 12t + 5},\]
an \emph{epitrochoid},
\[ t\longmapsto \frac{(-7t^4+288t^2 + 256) + (-80t^3 + 256t)i}{t^4 + 32t^2 + 256},\]
an \emph{offset curve to a cardioid},
\begin{align*}
t\longmapsto
&  \frac{6t^8 - 756t^6 + 3456t^5 - 31104t^3 + 61236t^2 - 39366}{t^8 + 36t^6 + 486t^4 + 2916t^2 + 6561} \\
& \qquad - \frac{18t(6t^6-16t^5-126t^4+864t^3-1134t^2-1296t+4374)}{t^8 + 36t^6 + 486t^4 + 2916t^2 + 6561}i,
\end{align*}
a \emph{hypocycloid} of degree 8,
\begin{align*} t\longmapsto
& \frac{-3 t^{8} - 24 t^{7} - 120 t^{6} - 384 t^{5} - 680 t^{4} - 608 t^{3} - 224 t^{2} + 16}{t^{8} + 8 t^{7} + 32 t^{6} + 80 t^{5} + 136 t^{4} + 160 t^{3} + 128 t^{2} + 64 t + 16}\\
& \qquad + \frac{16 t^{7} + 112 t^{6} + 304 t^{5} + 400 t^{4} + 320 t^{3} + 256 t^{2} + 192 t + 64}{t^{8} + 8 t^{7} + 32 t^{6} + 80 t^{5} + 136 t^{4} + 160 t^{3} + 128 t^{2} + 64 t + 16}i
,\end{align*}
and \emph{Rose curves}
\[
t\longmapsto \frac{2t + (1-t^2)i}{(1+t^2)^{n+1}} \sum_{k=0}^n \binom{2n}{2k} (-1)^k t^{2k},\qquad n = 2, 4, 6, 8, 10,
\]
of degrees 6, 10, 14, 18, and 22. In each case the average CPU time used by {\SimilarGen} is listed in Table \ref{tab:example-curves}. Observe that the degrees and coefficients of these examples are far from trivial. For more details we refer to the {\Sage} worksheet \cite{WebsiteGeorg}.

To conclude, notice that the order of $z_1$ and $z_2$ is important for the performance of {\SimilarGen}, so that one should let $z_2$ be the parametrization whose coefficients have the smallest bitsize.

\begin{table}
\begin{center}
\begin{tabular*}{\columnwidth}{l@{\extracolsep{\stretch{1}}}*{6}{r}@{}}
\toprule
$t_{\mathrm{pol}}\qquad$ & $\tau=1$ & $\tau=8$ & $\tau=16$ & $\tau=32$ & $\tau=64$ & $\tau=128$\\
\midrule
$d =  3$ &  0.024 &  0.022 &    0.024 &    0.020 &    0.030 &     0.038\\
$d =  6$ &  0.076 &  0.198 &    0.248 &    0.336 &    0.650 &     1.464\\
$d =  9$ &  0.764 &  2.316 &    3.220 &    6.626 &   15.151 &    42.753\\
$d = 12$ &  3.188 & 14.589 &   24.902 &   61.318 &  159.452 &   448.476\\
$d = 15$ & 11.885 & 90.340 &  206.469 &  587.249 & 1244.176 &  4439.909\\
\bottomrule
\end{tabular*}
\end{center}
\caption{Average CPU time $t_\text{pol}$ (seconds) of {\SimilarPol} applied to random polynomial parametrizations of given degree $d$ and with integer coefficients with bitsizes bounded by $\tau$.}\label{tab:performance_pol}
\end{table}

\begin{table}
\begin{center}
\begin{tabular*}{\columnwidth}{l@{\extracolsep{\stretch{1}}}*{6}{r}@{}}
\toprule
$ t_{\text{gen}}\qquad$  &  $\tau=1$ &  $\tau=2$ &  $\tau=4$ &  $\tau=8$ & $\tau=16$ & $\tau=32$ \\ 
\midrule
$d=2$  &      0.33 &      0.43 &      0.64 &      1.40 &      3.26 &      9.76 \\ 
$d=3$  &      1.50 &      3.90 &      8.53 &     21.94 &     85.00 &    387.69 \\ 
$d=4$  &      8.93 &     19.60 &     61.93 &    325.01 &    724.95 &   3881.44 \\ 
$d=5$  &     41.66 &     96.12 &    380.02 &   3956.86 &  22608.96 &           \\ 
$d=6$  &     99.05 &    452.70 &   1028.22 &  17606.67 &           &           \\ 
$d=7$  &    250.39 &   2670.24 &  19478.14 &           &           &           \\ 
$d=8$  &   1805.01 &  21610.26 &           &           &           &           \\ 
\bottomrule
\end{tabular*}
\end{center}
\caption{Average CPU time $t_\text{gen}$ (seconds) of {\SimilarGen} applied to random rational parametrizations of given degree $d$ and with integer coefficients with bitsizes bounded by $\tau$.}\label{tab:performance_gen}
\end{table}

\begin{table}
\begin{center}
\begin{tabular*}{\columnwidth}{l@{\extracolsep{\stretch{1}}}*{6}{c}@{}}
\toprule
curve & Descartes'& Bernoulli's & epitrochoid & cardioid & hypocycloid\\
& folium & \!lemniscate\!& & offset & \\
 &
\includegraphics[scale=0.7]{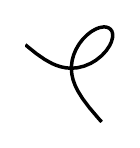}  & \includegraphics[scale=0.7]{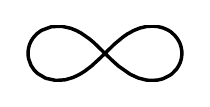} &  \includegraphics[scale=0.7]{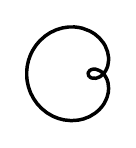} & \includegraphics[scale=0.7]{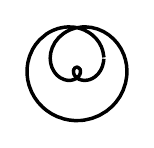} & \includegraphics[scale=0.7]{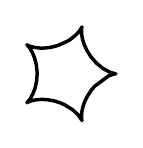} \\
degree   &          3 &                 4 &           4 &               8 &    8 \\
CPU time &       0.11 &              0.32 &        0.11 &            0.77 & 3.59 \\
\midrule
curve & 4-leaf rose & 8-leaf rose & 12-leaf rose & 16-leaf rose & 20-leaf rose \\
 &
\includegraphics[scale=0.7]{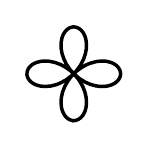} & \includegraphics[scale=0.7]{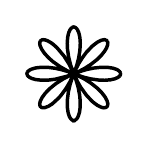} &  \includegraphics[scale=0.7]{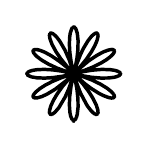}  & \includegraphics[scale=0.7]{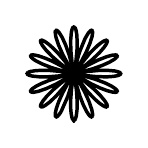} & \includegraphics[scale=0.7]{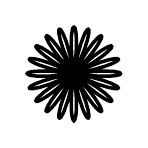} \\
degree      & 6 & 10 & 14 & 18 & 22 \\
CPU time    & 0.24 & 3.50 & 24.83 & 118.74 & 703.12 \\
\bottomrule
\end{tabular*}
\end{center}
\caption{Average CPU time (seconds) of {\SimilarGen} for well-known curves.}\label{tab:example-curves}
\end{table}

\begin{figure}
\begin{subfigure}[b]{0.45\textwidth}
  \includegraphics[scale=0.6]{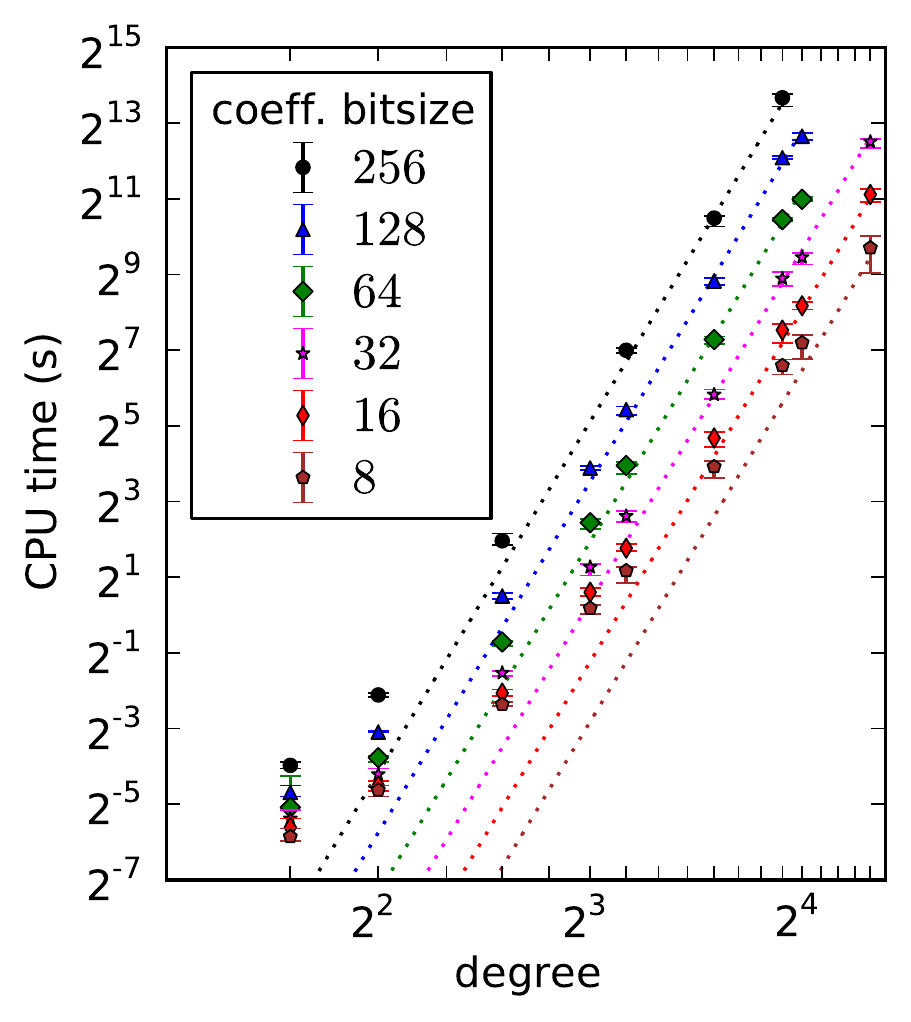}
  \caption{ }\label{fig:CPU-vs-degree}
\end{subfigure}%
\qquad
\begin{subfigure}[b]{0.45\textwidth}
  \centering
  \includegraphics[scale=0.6]{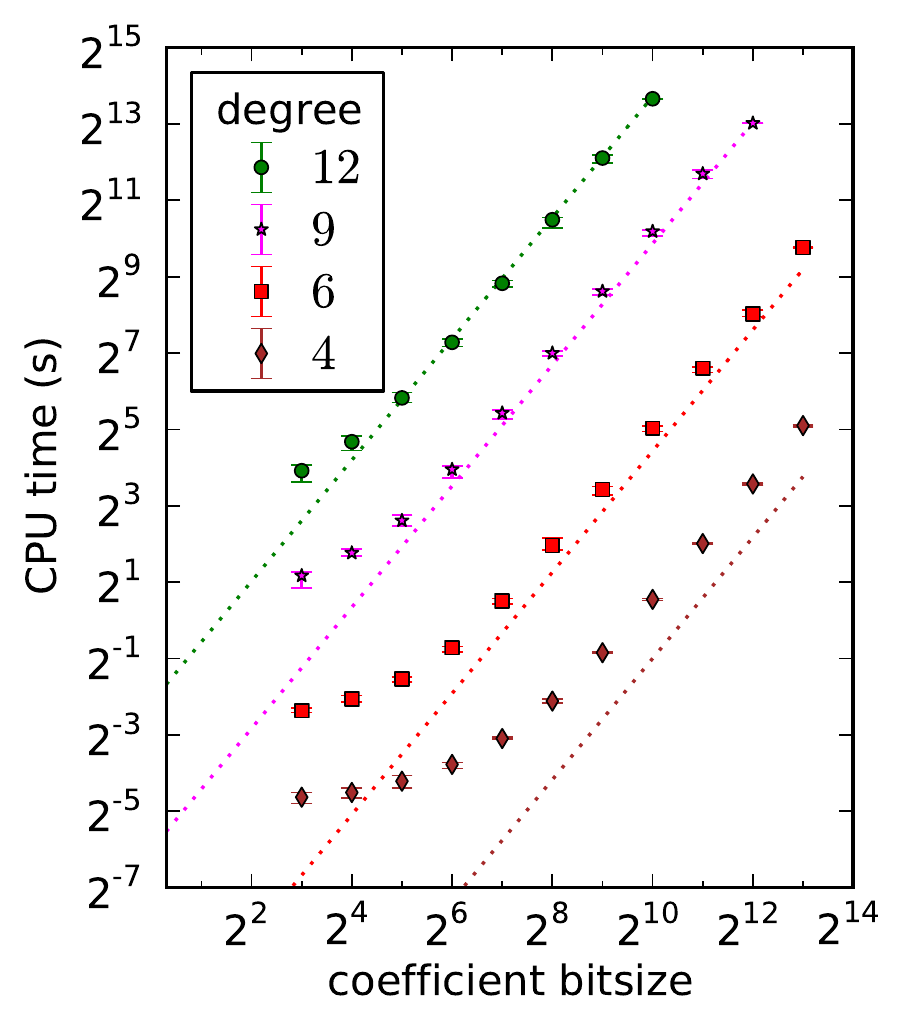}
  \caption{ }\label{fig:CPU-vs-coeff}
\end{subfigure}%
\caption{Double logarithmic plots of the average CPU time of {\SimilarPol} versus degree (left) and versus the bitsize of the coefficients (right). The error bars show the range of CPU times found for the various random polynomials. The dotted line represents the fitted power law for $t_\mathrm{pol}$ in \eqref{eq:leastsquares}.}\label{fig:CPU-pol}
\end{figure}

\begin{figure}
\begin{subfigure}[b]{0.45\textwidth}
  \includegraphics[scale=0.6]{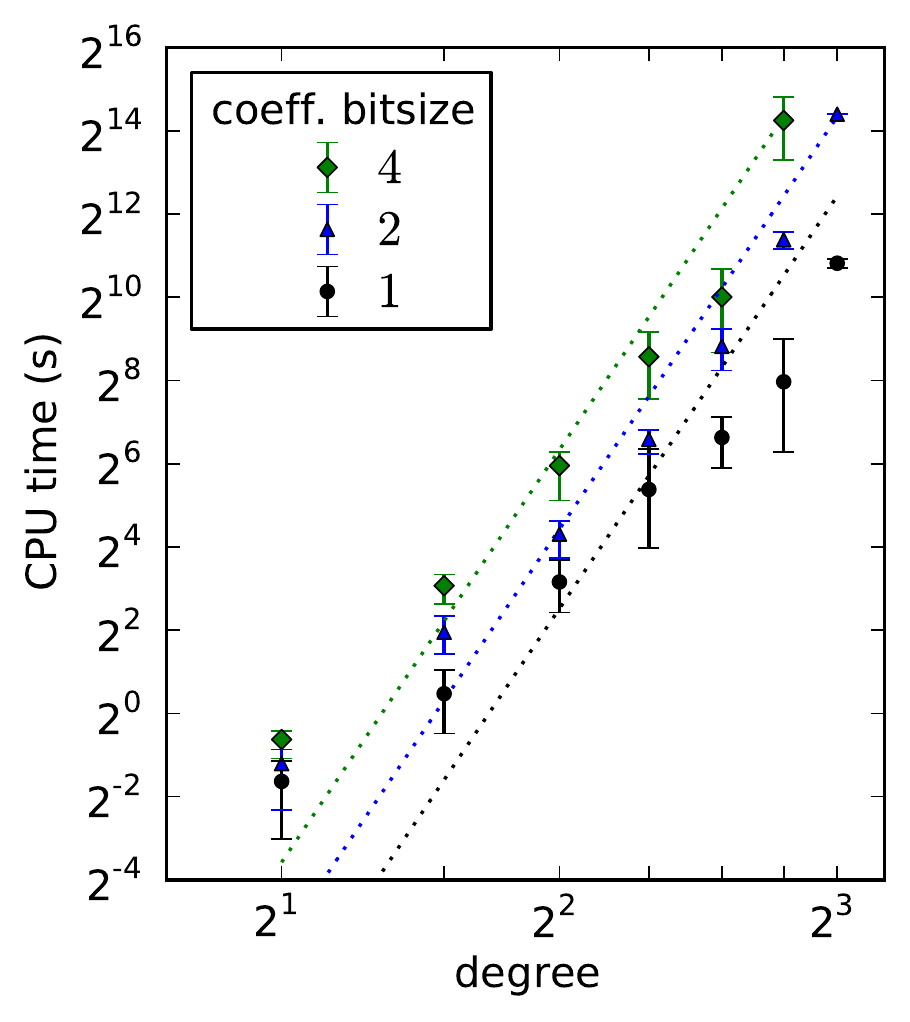}
  \caption{ }\label{fig:CPU-vs-degree-gen}
\end{subfigure}%
\qquad
\begin{subfigure}[b]{0.45\textwidth}
  \centering
  \includegraphics[scale=0.6]{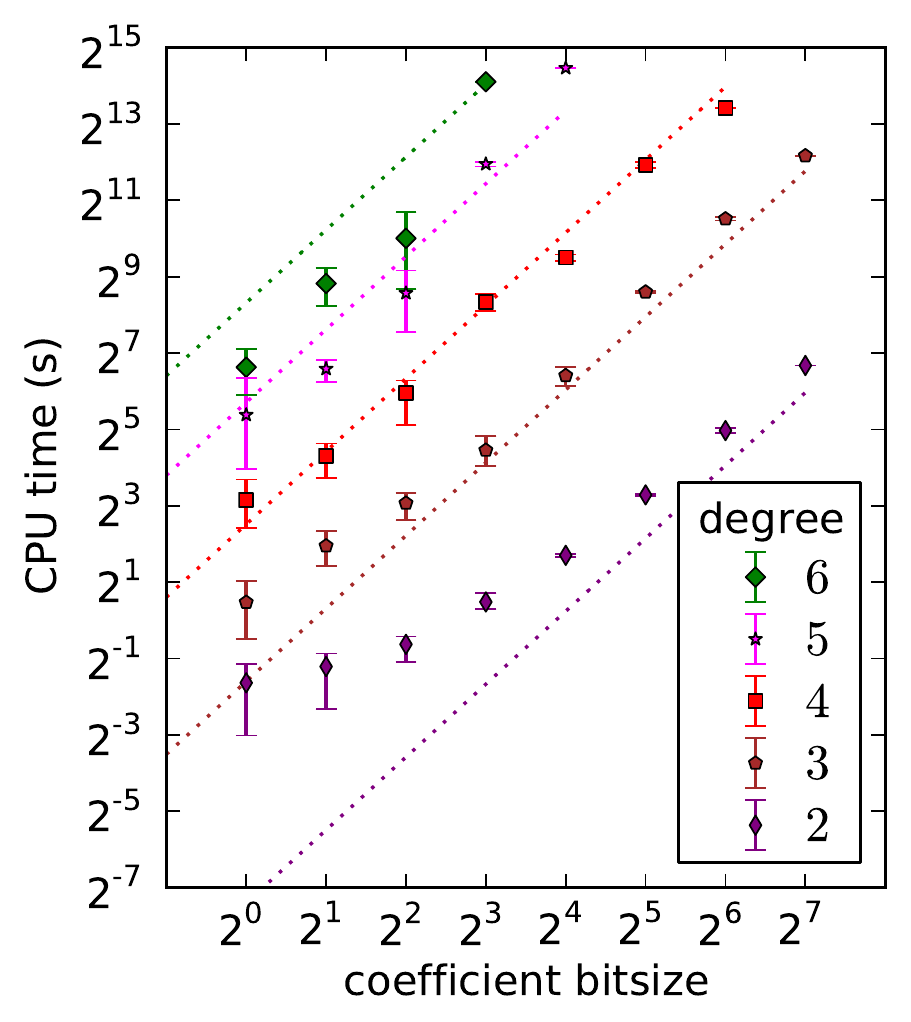}
  \caption{ }\label{fig:CPU-vs-coeff-gen}
\end{subfigure}%
\caption{Double logarithmic plots of the average CPU time of {\SimilarGen} versus degree (left) and versus the bitsize of the coefficients (right). The error bars show the range of CPU times found for the various random polynomials. The dotted line represents the fitted power law for $t_\mathrm{gen}$ in \eqref{eq:leastsquares}.}\label{fig:CPU-gen}
\end{figure}

\newpage

\end{document}